\documentclass[smallextended]{svjour3}       
\smartqed

\usepackage{graphicx}
\usepackage{amsmath}
\usepackage{amsfonts}
\usepackage{amssymb}
\usepackage{cite}
\usepackage{hyperref}
\usepackage{enumerate}
\usepackage{float}



\renewcommand{\Bbb}{\mathbb{B}}

\newcommand{\Rbb}{\mathbb{R}}
\newcommand{\Sbb}{\mathbb{S}}







\newcommand{\paren}[1]{\left(#1\right)}
\newcommand{\parenn}[1]{\left[#1\right]}
\newcommand{\parennn}[1]{\left\{#1\right\}}
\newcommand{\parennnn}[1]{\left|#1\right|}

\renewcommand{\Im}{{\rm Im}}
\renewcommand{\Re}{{\rm Re}}

\newcommand{\Ball}{{\Bbb}}

\newcommand{\set}[2]{\left\{#1\,\left|\,#2\right.\right\}}

\newcommand{\mmap}[3]{#1:\,#2\rightrightarrows #3\,}

\newcommand{\ip}[2]{\left\langle #1,#2\right\rangle}

\newcommand{\norm}[1]{\left\|#1\right\|}



\DeclareMathOperator{\Id}{Id}

\DeclareMathOperator{\argmin}{argmin\,}

\DeclareMathOperator{\dist}{dist}



\DeclareMathOperator{\Fix}{\mathsf{Fix}\,}

\newcommand{\ncone}[1]{{N}_{#1}}

\newcommand{\fncone}[1]{{\widehat{N}}_{#1}}





\newcommand {\Limsup} {\mathop{{\rm Lim\,sup}\,}}

\hypersetup{colorlinks=true,urlcolor=blue,citecolor=blue}
\usepackage{stackrel}

\usepackage[shortlabels]{enumitem}
\usepackage{epsfig}
\usepackage[final]{pdfpages}
\usepackage{float}
\newcommand{\TO}[1]{\stackrel{#1}{\to}}

\newcommand{\fft}{\mathcal{F}}







\DeclareMathOperator{\diag}{diag}



\newcommand*{\ee}{\ensuremath{\text{e}}}

\newcommand*{\jj}{\ensuremath{\text{j}}}


%
%
%
\begin{document}

\title{Convex combination of alternating projection and Douglas--Rachford operators for phase retrieval
\thanks{This project has received funding from the ECSEL Joint Undertaking (JU) under grant agreement No. 826589. The JU receives support from the European Union's Horizon 2020 research and innovation programme and Netherlands, Belgium, Germany, France, Italy, Austria, Hungary, Romania, Sweden and Israel.}
}
\titlerunning{Convex combination of AP and DR operators for phase retrieval}

\author{Nguyen Hieu Thao, Oleg Soloviev and Michel Verhaegen}


\institute{Nguyen Hieu Thao\at
Delft Center for Systems and Control,
Delft University of Technology, 2628CD Delft, The Netherlands. Department of Mathematics, School of Education, Can Tho University, Can Tho, Vietnam.
\email{h.t.nguyen-3@tudelft.nl, nhthao@ctu.edu.vn}\\
Oleg Soloviev\at
Delft Center for Systems and Control,
Delft University of Technology,
2628CD Delft, The Netherlands.
Flexible Optical B.V., Polakweg 10-11, 2288 GG Rijswijk, The Netherlands.
\email{o.a.soloviev@tudelft.nl}\\
Michel Verhaegen\at
Delft Center for Systems and Control,
Delft University of Technology,
2628CD Delft, The Netherlands.
\email{m.verhaegen@tudelft.nl}
}

\date{Received: date / Accepted: date}

\maketitle

\begin{abstract}

We present the convergence analysis of convex combination of the alternating projection and Douglas--Rachford operators for solving the phase retrieval problem.
New convergence criteria for iterations generated by the algorithm are established by applying various schemes of numerical analysis and exploring both physical and mathematical characteristics of the phase retrieval problem.
Numerical results demonstrate the advantages of the algorithm over the other widely known projection methods in practically relevant  simulations.

\keywords{Nonconvex feasibility \and Projection method \and Prox-regularity \and Transversality \and Linear convergence \and Phase retrieval \and Fourier transform
}

\subclass{Primary 49J53 \and 65K15 \and 65Z05
Secondary 49K40 \and 65K05 \and 90C26 \and 78A45 \and 90C26}
\end{abstract}

\section{Introduction}\label{s:intro}

\emph{Phase retrieval} is an inverse problem of recovering the phase of a complex signal from its measured amplitude. 
It appears in various modifications in many scientific and engineering fields, including astronomy imaging \cite{DaiFie87,HarTho00}, X-ray crystallography \cite{Mil90,Har93},  microscopy \cite{Arr99,KimZhoGodPop16} and adaptive optics \cite{Mugnier2006,AntVer15,VisVer13,VisBruVer16}.
An important application of phase retrieval in optics is to quantify the properties of an imaging system via its generalized pupil function \cite{Jan02,BraDirJan02,DoeThaVer18,PisGupSolVer18}. 
The fundamental advantage of this approach compared to those using intensity point spread functions (PSFs) or intensity optical transfer functions is that it is modifiable and automatically takes the specific characteristics of the imaging system under investigation into account.
In adaptive optics, one needs to know the phase of the optical field in the system aperture to be able to compensate for an optical aberration, and the phase retrieval is a basis for a wide class of focal-plane based wavefront sensors.

Since the fundamental work \cite{Say52} of Sayre in 1952, which reveals that the phase of a scattered wave can be recovered from the recorded images at and between Bragg peaks of a diffracted wavefront, a wide variety of solution methods for phase retrieval has been proposed and developed. 
For an overview of phase retrieval algorithms, we refer the reader to the papers \cite{Fie13,SheEldCohChaMiaSeg15,Luk17,LukSabTeb19}. 
Direct methods usually require insights about the crystallographic structure to recover the missing phase \cite{Hau86}. 
Such a structural information is not only costly in terms of computational complexity but also sensitive to noise and approximation, for example, due to physical limitation or model deviation. 
As a consequence, this approach lacks practicability and becomes less popular in practice. 
The second class of solution algorithms relies on the fact that phase retrieval problems can be reformulated as linear equations with rank and positive semidefinite constraints in higher dimensional spaces. 
Well known examples of this algorithm class are \emph{MaxCut}~\cite{GoeWil95}, \emph{PhaseCut} \cite{WalAspMal15} and \emph{PhaseLift} \cite{CanEldStrVor13,CanStrVor13}.
This convex relaxation approach requires the \emph{matrix lifting} step which is computationally demanding and hence not suitable for large-scale problems.
The most popular class of phase retrieval methods is based on projections and pioneered by the work of Gerchberg and Saxton \cite{GerSax72}, which deals with phase retrieval given a single PSF image and the amplitude of the complex signal, which in the sequel will be referred to as the \emph{amplitude constraint} in order to clearly differentiate it from the \emph{intensity constraints} determined by data images.
The need to deal with more and more phase retrieval models, for example, incorporating various types of \emph{a priori} constraint \cite{Fie82}, being given multiple images and involving regularization schemes, has given rise to a wide range of solution methods in this class.
It was recently observed by Luke et al. \cite{LukSabTeb19} that this class of methods actually outperforms the other classes of phase retrieval algorithms.

In light of \cite{LevSta84,BauComLuk02,LukBurLyo02}, phase retrieval can be interpreted as mathematical feasibility problems and, as a consequence, all algorithmic schemes for set feasibility can be adapted for phase retrieval. 
The current research is devoted to that topic. 
The alternating projection (AP) and the Douglas--Rachford (DR) algorithms are perhaps the most widely known solution methods for set feasibility and have served as a basis for a wide range of modifications and regularizations, see, for example, \cite{BauMou17,KruLukNgu18}.
It has been observed that AP is stable, always convergent and to some extent able to suppress noise but it may get stuck at undesired local minima and the convergence speed can be very slow \cite{Fie82}.
In contrast, DR can be faster in convergence and better in escaping from bad local minima but less robust against noise and model deviation \cite{Luk08}.
As a result, this algorithm can not be naively applied to practical problems which intrinsically involve noise and model approximation.
This fact has motivated a number of its efficient relaxation schemes such as the usage of the Krasnoselski--Mann relaxation, the Fienup's hybrid input-output (HIO) algorithm \cite{Fie82}, the \emph{relaxed averaged alternating reflections} (RAAR) algorithm \cite{Luk05,Luk08} and the DRAP algorithm \cite{Tha18}.

In this paper, we analyze the DRAP algorithm for solving the phase retrieval problem for the first time after having observed that it appears to be the most efficient algorithm for the problem setting under consideration, see Section \ref{s:numerical experiment}.
Interestingly, DRAP mathematically coincides with the convex combination of the AP and DR operators in the phase retrieval setting.
As a result, DRAP admits two mathematically equivalent descriptions (see \eqref{DRAP} and \eqref{DRAP_0} in Section \ref{s:projection algorithm}).
The first one ensures that its computational complexity is only approximate to that of each of the constituent operators and thus it is used for numerical implementation.
The second description as a convex combination of the AP and the DR operators exhibits a concrete connection to the fundamental projection algorithms and hence it is intuitively better situated on the map of projection methods (see Remark \ref{r:2 expressions}).

The main contribution of this paper is the convergence analysis of the DRAP algorithm for solving the phase retrieval problem.
First, using the analysis approach initiated by Chen and Fannjiang \cite{CheFan18}, we establish a convergence criterion for DRAP (Theorem \ref{t:LA}), which extends the convergence result of the DR algorithm formulated in that paper.
It is worth mentioning here that extending a convergence criterion for DR to a corresponding one for its relaxations such as HIO, RAAR and DRAP algorithms is not trivial\footnote{For example, similar criterion for RAAR was proved in \cite{LiZho17} while the one for HIO remains unknown.}.
Proposition \ref{p:remove_chi} extends the applicable scope of this type of convergence results\footnote{Including the criteria for DR and RAAR algorithms formulated in \cite{CheFan18} and \cite{LiZho17}, respectively.} to cover also phase retrieval problems with \textit{amplitude constraint}.
Second, applying the analysis scheme developed by Luke et al. \cite{LukNguTam18}, we establish another convergence criterion for the DRAP algorithm (Theorem \ref{t:VA}) by integrating the physical properties of the phase retrieval problem \cite{Luk08} into the earlier known results for DRAP \cite{Tha18}.
Recall that the analysis of the latter article involves only abstract mathematical notions in the general setting of set feasibility.
As a comparison, we make an attempt on connecting the two convergence criteria by linking their key mathematical assumptions to a single physical condition on the \textit{phase diversities} which are the almost only adjustable figures of the phase retrieval problem (see Remark \ref{r:criteria compared}).

The paper is organized as follows.
In the last part of this introductory section, we introduce the mathematical notation used in the paper.
Section \ref{s:problem formulation} is devoted to formulating the phase retrieval problem and addressing in details the key steps towards its solutions using projection algorithms.
A discussion on projection methods for phase retrieval is presented in Section \ref{s:projection algorithm}.
In Section \ref{s:convergence analysis}, convergence results of the DRAP algorithm are established using two different analysis approaches: 1) spectral analysis in Section \ref{subs:TooL_LA} and 2) variational analysis in Section \ref{subs:TooL_VA}.
Numerical simulation is presented in Section \ref{s:numerical experiment}.

\textbf{Mathematical notation.} The underlying space in this paper is a finite dimensional Hilbert space denoted by $\mathcal{H}$.
The element-wise multiplication is denoted by $\odot$.
The element-wise division $\frac{\;\cdot\;}{\;\cdot\;}$, absolute value $|\cdot|$, square $(\,\cdot\,)^2$ and square root $\sqrt{\,\cdot\,}$ operations are also frequently used but without need for extra notation.
$\Re(\cdot)$ and $\Im(\cdot)$ denote the real and the imaginary parts of a complex object in th brackets, respectively.
The imaginary unit is $\jj = \sqrt{-1}$.
$\Id$ denotes the \textit{identity mapping} while $I_n$ denotes the \textit{identity matrix} of size $n$.
The distance to a set $\Omega\subset \mathcal{H}$ is defined by
\begin{equation*}
\dist(\cdot, \Omega) \colon \mathcal{H}\to\Rbb_+\colon x\mapsto \inf_{w\in\Omega}\norm{x-w}
\end{equation*}
and the set-valued mapping
\begin{equation}\label{projection}
\mmap{P_\Omega}{\mathcal{H}}{\Omega}\colon
x\mapsto \set{w\in \Omega}{\norm{x-w}=\dist(x,\Omega)}
\end{equation}
is the \textit{projector} on $\Omega$.
A selection $w\in P_\Omega(x)$ is called a {\em projection} of $x$ on $\Omega$.
The \emph{reflection operator} associated with $\Omega$ is accordingly defined by $R_\Omega:=2P_\Omega-\Id$.
Given a subset $\Omega\subset\mathcal{H}$, the
\emph{Fr\'echet and limiting normal cones} to $\Omega$ at a point $\hat x\in \Omega$ are defined, respectively, as follows:
\begin{gather*}
\fncone{\Omega}(\hat x):= \left\{v\in\mathcal{H}\mid \limsup_{x\TO{\Omega}\hat x,\,x\neq \hat x} \frac{\langle v,x-\hat x \rangle}{\|x-\hat x\|} \leq 0 \right\},
\\
\ncone{\Omega}(\hat x):= \Limsup_{x\TO{\Omega}\hat x}\fncone{\Omega}(x):=\left\{v=
\lim_{k\to\infty}v_k\mid v_k\in \fncone{\Omega}(x_k),\; x_k\TO{\Omega} \hat x \right\},
\end{gather*}
where $x\TO{\Omega} \hat x$ means that $x\to \hat x$ and $x\in\Omega$.
The set of fixed points of an operator $T:\mathcal{H}\rightrightarrows\mathcal{H}$ is defined by
$\Fix T := \{x\in \mathcal{H}\mid x\in T(x)\}$.
Our other basic notation is standard; cf. \cite{Mor06.1,VA}.
$\mathbb{B}_\delta(x)$ stands for the open ball with radius $\delta>0$ and center $x$.
For a linear subspace $V$ of $\mathcal{H}$, 
\[
V^{\perp}:=\set{u\in \mathcal{H}}{\ip{u}{v}=0 \mbox{ for all } v\in V}
\]
is the orthogonal complement subspace of $V$.

\section{Problem formulation}\label{s:problem formulation}

\subsection{Phase retrieval}\label{subs:PR}

Phase diversities and the Fourier transform are key ingredients of the phase retrieval problem studied in this paper.
Recall that adding a phase diversity to the phase of a complex signal is a unitary transform and the (discrete) Fourier transform is also a unitary operator.
Since unitary transforms are one-to-one represented as unitary matrices, the phase retrieval problem can be formulated in the form of matrix-vector-multiplication as follows.
For an unknown complex object $\hat{x} \in \mathbb{C}^n$, let $M \in \mathbb{C}^{N\times n}$ be the propagation matrix which is normalized to be isometric, and $r \in \mathbb{R}_+^N$ be the measured data of $|M\hat{x}|^2$.
The phase retrieval problem is to find an (approximate) solution to the equation:
\begin{equation}\label{phase problem}
r = |Mx|^2 + w,\quad x\in \mathbb{C}^n,
\end{equation}
where $w\in \mathbb{R}^N$ represents unknown noise\footnote{Dimension $n$ corresponds to the pixel totality of one image.}.

\begin{remark}\label{2D versus 1D}
To formulate the phase retrieval problem in the matrix-vector-multiplication form \eqref{phase problem} or any feasibility model in Section \ref{subs:FP}, we need to vectorize all array objects in a consistent manner and rewrite all linear mappings as matrix multiplication operations in higher dimensional spaces, see, for example, \cite[section 2A]{DoeThaVer18}.
This one-to-one conversion allows us to do the theoretical analysis in the simple matrix-vector-multiplication formulation without loss of generality.
\end{remark}

In this paper, we study the phase retrieval setting with several phase diversities, and the propagation matrix $M$ takes the following form:
\begin{equation}\label{matrix M}
M = \frac{1}{\sqrt{m}}\left(
\begin{matrix}
FD_1
\\
FD_2
\\
\cdots
\\
FD_m
\end{matrix}
\right)\in \mathbb{C}^{N\times n},
\end{equation}
where $m\ge 2$ is the number of data images, $F\in \mathbb{C}^{n\times n}$ is the unitary matrix representing the discrete Fourier transform, and $D_d\in \mathbb{C}^{n\times n}$ are unitary matrices representing the \emph{phase diversities} which will be denoted by $\phi_d$ in the sequel $(d=1,2,\ldots,m)$.
Note that $N = mn$.

\begin{remark}[phase modulators versus out-of-focus measurements]\label{r:phase modulator}
There are two widely used techniques of acquiring the PSF images for phase-diversity phase retrieval.
First, a \textit{phase modulator} is used for introducing phase diversities in the pupil plane corresponding to which the images are measured in the focal plane.
Second, the images are registered in out-of-focus planes along the optical axis (i.e, parallel to the focal plane at some known distances) without the use of phase modulator.
It is well known that the two techniques are mathematically equivalent \cite{Goodman05}.
\end{remark}

When a priori knowledge of the solutions is available, that is $\hat{x}\in \chi$ for some known subset $\chi \subset \mathbb{C}^n$, one can expect more accurate phase retrieval.
The formulation \eqref{phase problem} is naturally modified as follows:
\begin{equation}\label{phase problem_chi}
r = |Mx|^2 + w,\quad x\in \chi.
\end{equation}

Following the background developed in \cite{GerSax72,Fie82,BauComLuk02,LukBurLyo02}, we are going to address the problem \eqref{phase problem_chi} using projection algorithms.
The main steps for this solution process will be detailed next.

\subsection{Feasibility models}\label{subs:FP}

Several feasibility models of phase retrieval have been formulated in either the physical domain\footnote{The unknown variable is the signal in the pupil plane.} \cite{LukBurLyo02,LevSta84} or the Fourier domain\footnote{The unknown variable relates to the signal in the pupil plane via the Fourier transform.} \cite{CheFan18}.
Viewing the Fourier transform and phase-diversity addition as unitary transforms, we clarify the relationship between various feasibility models of the phase retrieval problem.

In the physical domain, for each $d=1,2,\ldots,m$, let us denote $r_d$ the measurement of the PSF image $|FD_d(\hat{x})|^2$. 
Define the \textit{intensity constraint} sets as follows \cite{BauComLuk02,LukBurLyo02}:
\begin{equation}\label{amp_con_set}
\Omega_d := \parennn{x\in \mathbb{C}^n \mid (1/m)|FD_d(x)|^2 = r_d}\quad (1\le d\le m).
\end{equation}
Then, the problem \eqref{phase problem_chi} can be approached via the following feasibility problem involving multiple sets:
\begin{equation}\label{PR_physics}
\mbox{find}\quad x\in \bigcap_{d=0}^m \Omega_d,
\end{equation}
where $\Omega_0:=\chi$ captures a priori knowledge of the solutions.

\begin{remark}[nonconvexity feasibility]\label{r:nonconvex}
All the problem models appearing in this paper are nonconvex due to the nonconvexity of the intensity constraints $\Omega_d$ defined in \eqref{amp_con_set}.
\end{remark}

When addressing the phase retrieval problem with noise and model deviation, an appropriate averaging process is essential for suppressing noise.
For this, we consider the following feasibility model in the product space:
\begin{equation}\label{PR_physics_product}
\mbox{find }\quad u \in D\cap \Omega,
\end{equation}
where
\begin{equation}\label{Omega and D}
D:= \parennn{(x,x,\ldots,x) \in \mathbb{C}^{nm} \mid x\in \chi}\; \mbox{ and }\; \Omega:=\Omega_1\times \Omega_2\times \cdots\times \Omega_m.
\end{equation}
The equivalence between \eqref{PR_physics} and \eqref{PR_physics_product} in the general setting of set feasibility finds its root in \cite{Pie84}.
Without a priori constraint, i.e., $\chi=\mathbb{C}^n$, the set $D$ is the ($n$-dimensional subspace) \emph{diagonal} of the product space $\mathbb{C}^{nm}$.
The counterpart of \eqref{PR_physics_product} in the Fourier domain is as follows:
\begin{equation}\label{PR_Fourier}
\mbox{find }\quad y \in  A \cap B,
\end{equation}
where
\begin{equation}\label{A&B}
A:= M(\chi)\mbox{ and } B:=\{y\in \mathbb{C}^N\mid |y|^2=r\}.
\end{equation}

The 2-set feasibility models \eqref{PR_physics_product} and \eqref{PR_Fourier} allow us to adapt various algorithmic schemes including flexible relaxation and regularization for the phase retrieval problem.

The relationships between models \eqref{PR_physics}, \eqref{PR_physics_product} and \eqref{PR_Fourier} in the noiseless setting are as follows.

\begin{proposition}[equivalences of feasibility models] \label{p:problem_relation} Let $\hat{x} \in \mathbb{C}^n$ and $\hat{y}= M\hat{x}$. The following statements are equivalent:
\begin{enumerate}[(i)]
\item $\hat{x}$ is a solution to \eqref{PR_physics};
\item $[x]_m := \underbrace{(\hat{x},\hat{x},\ldots,\hat{x})}_{m~\mathtt{times}}$ is a solution to \eqref{PR_physics_product};
\item $\hat{y}$ is a solution to \eqref{PR_Fourier}.
\end{enumerate}
\end{proposition}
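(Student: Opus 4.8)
The plan is to obtain the three equivalences by unwinding the definitions of the sets involved, the only substantive ingredient being the injectivity of the isometric propagation matrix $M$. I would prove the chain (i) $\Leftrightarrow$ (ii) $\Leftrightarrow$ (iii).

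\emph{Step 1: (i) $\Leftrightarrow$ (ii).} By definition of the problem \eqref{PR_physics}, the point $\hat x$ lies in $\bigcap_{d=0}^m\Omega_d$ precisely when $\hat x\in\Omega_0=\chi$ and, for each $d=1,\dots,m$, $(1/m)|FD_d(\hat x)|^2=r_d$, i.e. $\hat x\in\Omega_d$ in the sense of \eqref{amp_con_set}. On the other hand, reading off \eqref{Omega and D}, the constant tuple $[x]_m=(\hat x,\dots,\hat x)$ belongs to $D$ exactly when $\hat x\in\chi$, and it belongs to $\Omega=\Omega_1\times\cdots\times\Omega_m$ exactly when $\hat x\in\Omega_d$ for every $d$. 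Comparing the two descriptions term by term gives $[x]_m\in D\cap\Omega \Leftrightarrow \hat x\in\bigcap_{d=0}^m\Omega_d$, which is the asserted equivalence. This step is pure rewriting with no analytic content; the role of the diagonal $D$ is exactly to encode ``all coordinates equal and in $\chi$''.

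\emph{Step 2: (i) $\Leftrightarrow$ (iii).} Write $\hat y=M\hat x$ using the block form \eqref{matrix M}, so that $\hat y=\frac{1}{\sqrt m}\big(FD_1\hat x\,;\,\dots\,;\,FD_m\hat x\big)$. Since $|\cdot|^2$ acts coordinate-wise, the stacked vector $|\hat y|^2$ decomposes block by block into the vectors $(1/m)|FD_d\hat x|^2$, $d=1,\dots,m$; and $r$ is, by construction, the corresponding concatenation of the $r_d$. Hence $|\hat y|^2=r$ holds if and only if $(1/m)|FD_d\hat x|^2=r_d$ for all $d$, i.e. if and only if $\hat x\in\Omega_d$ for all $d=1,\dots,m$; equivalently $\hat y\in B$ in the notation of \eqref{A&B}. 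For the a priori constraint: $\hat x\in\chi$ trivially forces $\hat y=M\hat x\in M(\chi)=A$; conversely, if $\hat y\in A$ then $\hat y=Mx'$ for some $x'\in\chi$, and since $M$ is normalized to be isometric it is injective, whence $x'=\hat x$ and $\hat x\in\chi$. Combining, $\hat y\in A\cap B$ if and only if $\hat x\in\chi$ and $\hat x\in\Omega_d$ for $d=1,\dots,m$, which by Step 1 is exactly statement (i). This closes the cycle and finishes the proof.

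\emph{Main obstacle.} There is no deep difficulty; the two places demanding care are the bookkeeping of the vectorization and block conventions of Remark~\ref{2D versus 1D} — one must be sure that the element-wise modulus applied to the stacked vector $M\hat x$ genuinely splits into the per-diversity pieces, which it does because it acts coordinate-wise and the $d$-th block of $M\hat x$ is $\frac{1}{\sqrt m}FD_d\hat x$ — and remembering to invoke injectivity of $M$ to obtain the nontrivial direction of $\hat y\in A \Leftrightarrow \hat x\in\chi$ rather than merely the easy inclusion $M(\chi)\supseteq\{M\hat x\}$ when $\hat x\in\chi$.
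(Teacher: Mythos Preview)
Your proof is correct and follows the same approach as the paper; the paper's own proof is a two-sentence sketch that cites Pierra for (i)$\Leftrightarrow$(ii) and invokes the isometry property $M^*M=I_n$ for (i)$\Leftrightarrow$(iii), and you have simply unpacked these into explicit definition-chasing, using the injectivity of the isometric $M$ where the paper points to $M^*M=I_n$.
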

\begin{proof}
The equivalence between $(i)$ and $(ii)$ is widely known \cite{Pie84}, while the equivalence between $(i)$ and $(iii)$ follows from the unitarity property of the matrix $M$ given in \eqref{matrix M}, that is, $M^*M=I_n$.
\qed
\end{proof}
 
\begin{remark}[inconsistent feasibility]\label{r:inconsistent}
In practical circumstances, for example, due to the presence of noise and model deviation, the intersection in \eqref{PR_physics}, \eqref{PR_physics_product} and \eqref{PR_Fourier} is likely to be empty.
There are natural interpretations of inconsistent feasibility in terms of minimization involving indicator and distance functions.
For example, let us interpret the AP method for solving the (possibly inconsistent) feasibility \eqref{PR_Fourier} in terms of classical algorithms for minimization.
The worrisome issue regarding the emptiness of the intersection would be eased when one associates \eqref{PR_Fourier} with the following minimization problem:
\begin{equation}\label{minP_3}
\min_{y\in B}\; f(y):=
\frac{1}{2} \dist^2(y,A).
\end{equation}
In view of Proposition \ref{p:remove_chi} (which is proved later in Section \ref{subs:TooL_LA}), the set $A$ defined in \eqref{A&B} can be assumed to be convex, and hence the objective function $f$ in \eqref{minP_3} is differentiable with the gradient given by $\nabla f(y)=y-P_A(y)$ for every point $y$ \cite{PolRocThi00}.
Then, alternating projection for solving \eqref{PR_Fourier} is precisely the \emph{projected gradient method} for solving \eqref{minP_3}.
\end{remark}

\subsection{Projectors}\label{subs:prox-operator}

The decisive step of solving the feasibility problem \eqref{PR_Fourier} by projection algorithms is to calculate the two projectors on the sets $A$ and $B$ defined in \eqref{A&B}.
Since $B$ is geometrically the product of a number of circles of the complex number plane, an explicit form of the projector $P_B$, which is in general a set-valued mapping, is available \cite{BauComLuk02,LukBurLyo02}:
\begin{align}\label{P_B}
P_B(y) = \sqrt{r} \odot \frac{y}{|y|},\quad \forall y\in \mathbb{C}^N,
\end{align}
with the convention that $\frac{y_i}{|y_i|}=\Sbb$ whenever $y_i=0$, where $\Sbb$ denotes the complex unit circle\footnote{The subscript $i$ indicates the $i$th entry of the object.}.
In numerical computation, the (single-valued) selection of $P_B$ corresponding $\frac{y_i}{|y_i|}=1$ whenever $y_i=0$ is sufficient.

\begin{remark}[projector on regularized sets]\label{r:project on B_eps}
In view of Remark \ref{r:inconsistent}, the set $B$ can have no common point with the set $A$.
For ways of handling such a feasibility gap, one can think of regularizing or approximating the set $B$.
For example, Luke \cite{Luk12} proposed to enlarge the set $B$ to
\begin{equation*}
B_\varepsilon := \left\{y\in \mathbb{C}^N \mid \dist_\phi(y,b)\le \varepsilon,\quad \forall b\in B
\right\},
\end{equation*}
where $\varepsilon\ge 0$ can be viewed as the radius of enlargement, $\dist_\phi$ is the \emph{Bregman distance}, associated with a strictly convex function $\phi:\mathbb{R}^N\to (-\infty,\infty]$ which is differentiable on the interior of its domain, given by
\[
\dist_\phi(y,z) := \phi(|y|) - \phi(|z|) - \ip{\nabla\phi(|y|)}{|y|-|z|},\quad \forall y,z\in \mathbb{C}^N.
\]	
The function $\phi$ should be chosen in accordance with the statistical model of the noise $w$ in \eqref{phase problem_chi}.
More specifically, let us consider the Gaussian and Poisson models of noise, which are perhaps the most relevant to phase retrieval.
The Bregman distance associated with the \emph{half energy kernel operator} $\phi=\frac{1}{2}\|\cdot\|^2$ is simply the Euclidean norm, and it is appropriate for Gaussian noise.
Let us define the function $\phi:\mathbb{R}^N\to (-\infty,\infty]$ by
	\begin{equation}\label{phi for Poisson}
	\phi(v) := \sum_{i=1}^{N} f(v_i),\; \forall v\in \mathbb{R}^N\; \text{where}\; 
	f(t) := \begin{cases}
	t\log t-t & \text{if}\quad t>0,\\
	0  & \text{if}\quad t=0,\\
	\infty  & \text{if} \quad t<0.
	\end{cases}
	\end{equation}
The Bregman distance associated with the function $\phi$ given by \eqref{phi for Poisson} is the \textit{Kullback-Leibler divergence}, and it is appropriate for Poisson noise.
The projector on the regularized set $B_\varepsilon$ can be viewed as an approximation of the projector on $B$, and hence it can be used in the framework of projection methods.
The cyclic projection algorithm using approximate projectors of this type has been analyzed by Luke \cite{Luk12}, and in fact his idea can also be extended to other projection methods. 
However, since the projector on a regularized set is often much more complicated to be computed than the one on the original set, we can instead treat the latter one as an approximation of the former one \cite[page 22]{LukSabTeb19}.
This insight about approximate projectors for inconsistent feasibility allows us to simply use the formula \eqref{P_B} for both analytical and numerical purposes without any worrisome issue.
\end{remark}

The projector on the set $A$ can also be explicitly described\footnote{Note that convexity of $A$ is not required in Lemma \ref{l:P_A}.}.
We make use of the following notation:
\begin{equation*}
[\chi]_m := \left\{[x]_m \mid x\in \chi \right\}\; \mbox{ where } \; [x]_m := \underbrace{(x,x,\ldots,x)}_{m~ \mathtt{times}}.
\end{equation*}

\begin{lemma}\label{l:P_A} For the propagation matrix $M$ given in \eqref{matrix M}, it holds that
	\begin{align}\label{P_A}
	P_A(y) = MP_{\chi}\paren{M^*y},\quad \forall y\in \mathbb{C}^N.
	\end{align}
\end{lemma}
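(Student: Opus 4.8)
The plan is to exploit that, under the normalization in \eqref{matrix M}, $M$ is a linear isometry, and to reduce the projection onto $A = M(\chi)$ to the projection onto $\chi$ by peeling off the component of $y$ that is orthogonal to the range of $M$.

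First I would record the algebraic consequences of \eqref{matrix M}. Since $F$ and every $D_d$ are unitary, $M^*M = \tfrac{1}{m}\sum_{d=1}^m D_d^* F^* F D_d = I_n$, so $\norm{Mx} = \norm{x}$ for all $x\in\mathbb{C}^n$; in particular $M$ is injective, $A\subseteq \range M$, and $E := MM^*$ is the orthogonal projector of $\mathbb{C}^N$ onto the subspace $\range M$.

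Next, I would fix $y\in\mathbb{C}^N$, decompose $y = Ey + (\Id - E)y$ with $(\Id-E)y \perp \range M$, and observe that for any $x\in\mathbb{C}^n$ the vector $Ey - Mx = M(M^*y - x)$ lies in $\range M$. The Pythagorean identity together with the isometry property then gives
\[
\norm{y - Mx}^2 = \norm{(\Id - E)y}^2 + \norm{M(M^*y - x)}^2 = \norm{(\Id - E)y}^2 + \norm{M^*y - x}^2 .
\]
Minimizing over $x\in\chi$, the first summand is a constant independent of $x$, so $\dist^2(y,A) = \norm{(\Id-E)y}^2 + \dist^2(M^*y,\chi)$, and $x$ attains the infimum precisely when $x\in P_\chi(M^*y)$. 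Because $M$ restricts to a bijection from $\chi$ onto $A$, the minimizers on the $A$-side are exactly the points $Mx$ with $x\in P_\chi(M^*y)$, which is the claimed identity \eqref{P_A}.

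I do not expect a genuine obstacle: the argument is the standard "projection onto the image of an isometry". The only two points needing a moment's care are that $A$ is contained in $\range M$ (so that the orthogonal component $(\Id-E)y$ may be dropped when minimizing) and that the argmin set is transported correctly along the injective linear map $M$. In particular, no convexity of $\chi$ is used, consistent with $P_A$ and $P_\chi$ being genuinely set-valued.
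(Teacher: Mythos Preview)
Your proof is correct. It is the clean, general argument for projecting onto the image of a linear isometry: once $M^*M=I_n$ is established, the Pythagorean split $\norm{y-Mx}^2=\norm{(\Id-MM^*)y}^2+\norm{M^*y-x}^2$ immediately reduces the minimization over $A=M(\chi)$ to the minimization over $\chi$. Nothing beyond the isometry property of $M$ is used.

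The paper takes a genuinely different route. It embeds $M$ into a full unitary matrix $U\in\Cbb^{N\times N}$ (the block-diagonal matrix with blocks $FD_d$), writes $A=\tfrac{1}{\sqrt m}\,U([\chi]_m)$, and then computes $P_A$ by passing the projection through the unitary $U$ and factoring $P_{\frac{1}{\sqrt m}[\chi]_m}=P_{\frac{1}{\sqrt m}[\chi]_m}\circ P_{[\Cbb^n]_m}$, where $[\Cbb^n]_m$ is the diagonal of the product space. In effect the paper first projects onto the diagonal (which amounts to averaging the blocks $(FD_d)^*c_d$, yielding $M^*y$) and then onto $\chi$. Your orthogonal component $(\Id-MM^*)y$ plays exactly the role of the off-diagonal part that the paper discards via $P_{[\Cbb^n]_m}$.

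What each approach buys: your argument is shorter and more transparent, and it makes explicit that only the single hypothesis $M^*M=I_n$ is needed (no block structure, no convexity of $\chi$). The paper's construction, by contrast, ties the computation to the product-space feasibility model \eqref{PR_physics_product} and to the diagonal set $D$ in \eqref{Omega and D}, and it introduces the unitary $U$ of \eqref{matrix U} that is reused later in the proof of Proposition~\ref{p:transversal of Omega_d}. So the paper's proof carries more structural information relevant to the rest of the article, while yours is the natural standalone argument.
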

\begin{proof}
	Let us first define the unitary matrix based on the matrix $M$ as follows:
	\begin{equation}\label{matrix U}
	U := \left(
	\begin{matrix}
	FD_1 & 0 & \cdots & 0
	\\
	0 & FD_2 & \cdots & 0
	\\
	\cdots & \cdots & \cdots & \cdots
	\\
	0 & 0 & 0 & FD_m
	\end{matrix}
	\right) \in \mathbb{C}^{N\times N}.
	\end{equation}
	This block diagonal matrix is unitary since all of its constituent blocks are so.
	By the structure of $M$ and $U$, we have that
	\[
	A = M(\chi) = \frac{1}{\sqrt{m}}U\paren{[\chi]_m}.
	\]
	Since $U$ is unitary, it holds that
	\begin{align}\label{pass projection}
	P_A(y) = P_{\frac{1}{\sqrt{m}}U\paren{[\chi]_m}}(y)
	= P_{U\paren{\frac{1}{\sqrt{m}}[\chi]_m}}(y)
	= U\paren{P_{\frac{1}{\sqrt{m}}[\chi]_m}(U^*y)}.
	\end{align}
	Since $\parenn{\mathbb{C}^n}_m$ is a subspace containing $\frac{1}{\sqrt{m}}\parenn{\chi}_m$, by the properties of the metric projection, we have that
	\begin{align}\label{split projections}
	P_{\frac{1}{\sqrt{m}}[\chi]_m} = P_{\frac{1}{\sqrt{m}}[\chi]_m}\circ P_{\parenn{\mathbb{C}^n}_m}.
	\end{align}
	We next calculate $U^*y$.
	Note that $U^*$ is also a block diagonal matrix whose blocks are the conjugate transpose of the corresponding blocks of $U$.
	Let us denote $c_k$ the column vector whose entries taken from $y$ correspond to the block $(FD_k)^*$ of $U^*$, $(1\le k\le m)$.
	We have that
	\begin{equation*}
	U^*y = \left(
	\begin{matrix}
	(FD_1)^*c_1
	\\
	(FD_2)^*c_2
	\\
	\ldots
	\\
	(FD_m)^*c_m
	\end{matrix}
	\right).
	\end{equation*}
	Since $\parenn{\mathbb{C}^n}_m$ is the $n$-dimensional diagonal of the product space $\mathbb{C}^{nm}$, we obtain by solving the minimizing problem \eqref{projection} that
	\begin{equation}\label{proj of U^*y}
	P_{\parenn{\mathbb{C}^n}_m}(U^*y) = \frac{1}{m}\left[\sum_{k=1}^m (FD_k)^*c_k\right]_m = \frac{1}{\sqrt{m}}\left[M^*y\right]_m.
	\end{equation}
	Plugging \eqref{proj of U^*y} and \eqref{split projections} into \eqref{pass projection} yields that
	\begin{align*}
	P_A(y) &= U\paren{P_{\frac{1}{\sqrt{m}}[\chi]_m}\frac{1}{\sqrt{m}}\left[M^*y\right]_m} =
	U\paren{\frac{1}{\sqrt{m}}P_{[\chi]_m}\left[M^*y\right]_m}
	\\
	&= \frac{1}{\sqrt{m}}U\paren{P_{[\chi]_m}\left[M^*y\right]_m} = \frac{1}{\sqrt{m}}U\paren{\left[P_{\chi}\paren{M^*y}\right]_m}
	= MP_{\chi}\paren{M^*y}.
	\end{align*}
	The proof is complete.
\qed
\end{proof}

The formula \eqref{P_A} shows that the complexity of $P_A$ heavily depends on that of $P_{\chi}$.

\section{Projection algorithms}\label{s:projection algorithm}

Projection algorithms for phase retrieval can be considered as descendants of the well known Gerchberg--Saxton (GS) algorithm \cite{GerSax72} which deals with phase retrieval given the amplitude constraint and a single PSF image. 
Their introduction has been motivated by the rapidly growing application of phase retrieval originated from a wide variety of physical settings.
For example, the famous \emph{input-output}, \emph{output-output} and \emph{hybrid-input-output} algorithms \cite{Fie82} arose up when dealing with the support and the real and nonnegative constraints instead of the amplitude constraint as the GS method.
Extensions for solving problems given multiple images and for obtaining better restoration have been among the main objectives of this class of phase retrieval algorithms.
In light of the groundwork \cite{BauComLuk02}, in Section \ref{subs:FP} we have interpreted the phase retrieval problem \eqref{phase problem_chi} as a feasibility problem in one of the equivalent forms \eqref{PR_physics}, \eqref{PR_physics_product} and \eqref{PR_Fourier}. 
Having calculated the projectors $P_A$ and $P_B$ in Section \ref{subs:prox-operator}, we are now ready to discuss algorithmic schemes for the solutions. 
From now on, we analyze the feasibility model \eqref{PR_Fourier}.

AP and DR are perhaps the most widely known solution methods for feasibility and have been the basis for a wide variety of modification and regularization schemes.
We refer the reader to, for example,  \cite{KruLukNgu18,BauMou17} for an overview of these basic methods in the setting of set feasibility. 
For an early discussion in the context of phase retrieval, we refer the reader to the surveys \cite{LukBurLyo02,BauComLuk02}.
It has been observed that AP is stable, always convergent and to some extent able to suppress noise, but the convergence speed can be very slow \cite{Fie82}.
In contrast, DR can be fast in convergence, but sensitive to noise and model deviation \cite{Luk08}. 
Indeed, only relaxations of DR can be used for problems in the presence of noise and model mismatch.

The use of the Krasnoselski--Mann relaxation is perhaps the most widely known.
Mathematically, it is the convex combination of the DR operator $T_{\mathtt{DR}}:=\frac{1}{2}\paren{R_AR_B+\Id}$ and the identity mapping:
\begin{equation*}
T_{\mathtt{KMDR}} := \beta T_{\mathtt{DR}} + (1-\beta)\Id,
\end{equation*}
where $\beta \in (0,1]$ is the relaxation parameter. 
The Fienup's \emph{hybrid-input-output} (HIO) method \cite{Fie82} can be viewed as a relaxation of DR:
\begin{equation*}
T_{\mathtt{HIO}} := P_A\left((1+\beta)P_B-\Id\right) - \left(\beta P_B-\Id\right),
\end{equation*}
where $\beta\in (0,1]$ is the relaxation parameter.
Another relaxation of DR known as the \emph{relaxed averaged alternating reflections} (RAAR) algorithm was proposed and analyzed in \cite{Luk05,Luk08} for phase retrieval. 
It is the convex combination of the DR operator and one of the projectors:
\begin{equation*}
T_{\mathtt{RAAR}} := \beta T_{\mathtt{DR}} + (1-\beta)P_B,
\end{equation*}
where $\beta\in (0,1]$ is the relaxation parameter.
Inexact versions of RAAR were also proposed and analyzed in \cite{Luk08}.
The DRAP algorithm \cite{Tha18} is another relaxation of DR:
\begin{equation}\label{DRAP}
T_{\mathtt{DRAP}} := P_A\left((1+\lambda)P_B-\lambda\Id\right)-\lambda\left(P_B-\Id\right),
\end{equation}
where $\lambda\in [0,1]$ is the relaxation parameter\footnote{Relaxation parameter zero is not allowed for KMDR, HIO and RAAR.}.

Interestingly, in the phase retrieval setting \eqref{PR_Fourier}, $T_{\mathtt{DRAP}}$ coincides with the convex combination of the AP and DR operators provided that $\chi$ is an affine set.
The latter condition implies that the set $A=M(\chi)$ given by \eqref{A&B} is affine.
Hence, the projector $P_A$ is linear and we obtain the the following expression:
\begin{align}\nonumber
T_{\mathtt{DRAP}} &=
P_A\left((1-\lambda)P_B+\lambda(2P_B-\Id)\right)-\lambda\left(P_B-\Id\right)
\\\nonumber
&= \lambda\paren{\Id + P_A(2P_B-\Id) - P_B} + (1-\lambda)P_AP_B 
\\\label{DRAP_0}
&= \lambda T_{\mathtt{DR}} + (1-\lambda)T_{\mathtt{AP}},
\end{align}
where $T_{\mathtt{AP}}:=P_AP_B$ is the AP operator.

\begin{remark}\label{r:2 expressions}
The two expressions \eqref{DRAP} and \eqref{DRAP_0} play their own role in explaining interesting features of DRAP\footnote{They do differ in general settings.}.
On the one hand, only two projections are required for computing an iteration of \eqref{DRAP} ($P_B$ once and $P_A$ once) compared to three projections for \eqref{DRAP_0} ($P_B$ once and $P_A$ twice).
This means that the computational complexity of DRAP is at the same level as that of the other projection methods if \eqref{DRAP} is used in numerical implementation.
On the other hand, the expression \eqref{DRAP_0} as a convex combination of $T_{\mathtt{AP}}$ and $T_{\mathtt{DR}}$ explains better the idea leading to the introduction of DRAP as a relaxation of DR compared to the less intuitive form \eqref{DRAP}.
\end{remark}

Plugging the two projectors \eqref{P_B} and \eqref{P_A} into \eqref{DRAP}, we come up with the following explicit form of DRAP for addressing the feasibility problem \eqref{PR_Fourier}:
\begin{equation}\label{DRAP_chi}
\begin{aligned}
y^+ &\in T_{\mathtt{DRAP}}(y)
\\
&= MP_{\chi}\Big{(}M^*\Big{(}(1+\lambda) \sqrt{r} \odot\frac{y}{|y|}-\lambda y\Big{)}\Big{)} - \lambda\Big{(}\sqrt{r}\odot\frac{y}{|y|}-y\Big{)},
\end{aligned}
\end{equation}
where $y$ and $y^+$ stand for the two consecutive iterations of DRAP.
In the case $\chi=\mathbb{C}^n$, \eqref{DRAP_chi} further reduces to
\begin{equation}\label{DRAP_without_chi}
\begin{aligned}
T_{\mathtt{DRAP}}(y) &= MM^*\Big{(}(1+\lambda)\sqrt{r}\odot\frac{y}{|y|}-\lambda y\Big{)} - \lambda\Big{(}\sqrt{r} \odot\frac{y}{|y|}-y\Big{)}
\\
&= \lambda\paren{I_N-MM^*}(y) + \paren{(1+\lambda)MM^*-\lambda I_N}\Big{(}\sqrt{r}\odot\frac{y}{|y|}\Big{)}.
\end{aligned}
\end{equation}

In the remainder of this paper, we analyze the DRAP algorithm in the phase retrieval setting \eqref{PR_Fourier} and demonstrate its advantages over the other algorithms.

\section{Convergence analysis}\label{s:convergence analysis}

In this section, we study convergence properties of DRAP using two different analysis schemes. 
Since the problem \eqref{PR_Fourier} is nonconvex, we can only obtain local convergence criteria though it is observed from numerical results that the quality of phase retrieval is not affected by the starting point for the algorithm.

\subsection{A result from spectral analysis}\label{subs:TooL_LA}

The analysis in this section is based on the observation that the projector $P_A$ given by \eqref{P_A} is linear, and the projector $P_B$ given by \eqref{P_B} also has a good first order approximation around any solution of \eqref{PR_Fourier}.
We follow the analysis approach initiated by Chen and Fannjiang \cite{CheFan18} where they established a local linear convergence result for the DR algorithm.
The mentioned result of \cite{CheFan18} was later extended for the RAAR algorithm in \cite{LiZho17}.
We will show that DRAP also enjoys that kind of convergence result\footnote{Similar results for the HIO algorithm are unknown.}.
In this section, we assume that the lowest intensity of the images is strictly positive:
\begin{equation}\label{vanish_nowhere}
\min_{1\le i\le N} r_i > 0.
\end{equation}

\begin{remark}\label{r:about vanish nowhere}
When the phase diversities $\phi_d$ are assumed to be continuous random variables, condition \eqref{vanish_nowhere} is satisfied almost surely \cite{CheFan18}.
\end{remark}

We first analyze DRAP in the form \eqref{DRAP_without_chi} for solving \eqref{PR_Fourier} with $\chi=\mathbb{C}^n$.
Let us denote
\begin{equation*}
{Y}:= \diag \paren{\frac{\hat{y}}{|\hat{y}|}} \in \mathbb{C}^{N\times N},\; {L} := {Y}^*M \in \mathbb{C}^{N\times n},
\end{equation*}
where $\hat{y}$ is a solution to \eqref{PR_Fourier} and $\diag(\cdot)$ denotes the diagonal matrix with elements on its diagonal taken from the vector in the brackets.
Since $r=|\hat{y}|^2$ vanishes nowhere\footnote{Recall that the square amplitude is element-wise.} by \eqref{vanish_nowhere}, for all $y$ sufficiently close to $\hat{y}$, $|y|$ also vanishes nowhere.
In particular, for a fixed vector $v\in \mathbb{C}^N$, the vector $|\hat{y} + \varepsilon v|$ vanishes nowhere provided that $\varepsilon$ is sufficiently small. The next lemma establishes the first order approximation of $T_{\mathtt{DRAP}}$ as a complex vector valued function around $\hat{y}$ in a given direction.

\begin{lemma}[first order approximation of $T_{\mathtt{DRAP}}$]\label{l:gradient map}
For a vector $v\in \mathbb{C}^N$ and a sufficiently small number $\varepsilon>0$, we have
\begin{equation}\label{gradient formula}
T_{\mathtt{DRAP}}(\hat{y}+\varepsilon v) - T_{\mathtt{DRAP}}(\hat{y}) = \varepsilon\,Y\, \nabla(\mu) + o(\varepsilon),
\end{equation}
where $\mu := Y^*v\;$ and
$\;\nabla(\mu) := \lambda(I_N-LL^*)\mu + {\rm j} ((1+\lambda)LL^*-\lambda I_N)\Im(\mu)$.
\end{lemma}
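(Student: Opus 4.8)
The plan is to reduce the claim to a single first--order expansion — that of the normalization map $P_B$ — and then to conjugate by the unitary diagonal matrix $Y$. I would begin from the explicit form \eqref{DRAP_without_chi},
\[
T_{\mathtt{DRAP}}(y) = \lambda(I_N-MM^*)y + \big((1+\lambda)MM^*-\lambda I_N\big)P_B(y),
\]
which holds for $\chi=\mathbb{C}^n$. The first summand is linear in $y$, hence contributes exactly $\varepsilon\lambda(I_N-MM^*)v$ to the increment $T_{\mathtt{DRAP}}(\hat y+\varepsilon v)-T_{\mathtt{DRAP}}(\hat y)$. So everything comes down to expanding $P_B(y)=\sqrt{r}\odot(y/|y|)$ near $\hat y$, observing first that $P_B(\hat y)=\hat y$ because $\sqrt{r}=|\hat y|$.

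For that expansion I would argue coordinatewise. By \eqref{vanish_nowhere} (cf. Remark~\ref{r:about vanish nowhere}) each $\hat y_i\neq 0$, so write $\hat y_i=\rho_i e^{\jj\theta_i}$ with $\rho_i=|\hat y_i|=\sqrt{r_i}>0$; then $Y_{ii}=e^{\jj\theta_i}$ and $\mu_i=(Y^*v)_i=e^{-\jj\theta_i}v_i$, i.e. $v_i=e^{\jj\theta_i}\mu_i$. For small $\varepsilon$,
\[
\hat y_i+\varepsilon v_i = e^{\jj\theta_i}\big(\rho_i+\varepsilon\Re(\mu_i)+\jj\varepsilon\Im(\mu_i)\big),\qquad |\hat y_i+\varepsilon v_i| = \rho_i+\varepsilon\Re(\mu_i)+O(\varepsilon^2).
\]
Dividing and expanding the scalar reciprocal $1/(\rho_i+\varepsilon\Re(\mu_i)+O(\varepsilon^2))=\rho_i^{-1}+O(\varepsilon)$ gives $(\hat y_i+\varepsilon v_i)/|\hat y_i+\varepsilon v_i| = e^{\jj\theta_i}\big(1+\jj\varepsilon\,\Im(\mu_i)/\rho_i\big)+O(\varepsilon^2)$, and multiplying by $\sqrt{r_i}=\rho_i$ yields $(P_B(\hat y+\varepsilon v))_i=\hat y_i+\jj\varepsilon\,e^{\jj\theta_i}\Im(\mu_i)+O(\varepsilon^2)$. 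Since there are only finitely many coordinates, this assembles into
\[
P_B(\hat y+\varepsilon v)-P_B(\hat y) = \jj\varepsilon\,Y\,\Im(\mu)+o(\varepsilon).
\]

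Finally I would substitute this back into the linear combination and use unitarity of $Y$: here $v=Y\mu$, and $MM^*=Y(LL^*)Y^*$ since $L=Y^*M$. This turns the linear part into $\lambda Y(I_N-LL^*)\mu$ and the $P_B$ part into $\jj Y\big((1+\lambda)LL^*-\lambda I_N\big)\Im(\mu)$, so that
\[
T_{\mathtt{DRAP}}(\hat y+\varepsilon v)-T_{\mathtt{DRAP}}(\hat y) = \varepsilon\,Y\Big(\lambda(I_N-LL^*)\mu+\jj\big((1+\lambda)LL^*-\lambda I_N\big)\Im(\mu)\Big)+o(\varepsilon) = \varepsilon\,Y\nabla(\mu)+o(\varepsilon).
\]
The one genuinely delicate step is the coordinatewise expansion of $y_i/|y_i|$: one has to see that to first order the real part $\Re(\mu_i)$ of the perturbation merely rescales $\hat y_i$ and is annihilated by renormalization, whereas the imaginary part $\Im(\mu_i)$ rotates it, so only $\Im(\mu)$ enters the derivative of $P_B$; the remainder is routine manipulation with the unitary $Y$ and the identity $LL^*=Y^*MM^*Y$. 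I would also note at the outset that \eqref{vanish_nowhere} is precisely what makes $P_B$ single-valued and coordinatewise smooth in a neighborhood of $\hat y$, which legitimizes the whole computation.
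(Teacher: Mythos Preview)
Your proof is correct and follows essentially the same route as the paper's: both start from the explicit form \eqref{DRAP_without_chi}, separate off the linear term $\varepsilon\lambda(I_N-MM^*)v$, expand $P_B$ to first order near $\hat y$ to obtain $\jj\varepsilon\,Y\,\Im(\mu)+o(\varepsilon)$, and finish by conjugating via $MM^*=YLL^*Y^*$. The only difference is that you actually carry out the coordinatewise expansion of $y/|y|$ that the paper leaves as ``can be calculated directly.''
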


\begin{proof}
Let us first denote
\begin{equation*}
w_{\varepsilon} := \frac{\hat{y}+\varepsilon v}{|\hat{y}+\varepsilon v|}\; \mbox{ and }\; Y_{\varepsilon} := \diag(w_{\varepsilon}).
\end{equation*}
In view of \eqref{DRAP_without_chi}, we have that
\begin{align*}
T_{\mathtt{DRAP}}(\hat{y}) &= \hat{y} = \paren{(1+\lambda)MM^*-\lambda I_N}Y r,
\\
T_{\mathtt{DRAP}}(\hat{y}+\varepsilon v) &= \lambda\paren{I_N - MM^*}(\hat{y}+\varepsilon v) + \paren{(1+\lambda)MM^*-\lambda I_N}Y_{\varepsilon}r
\\
&= \varepsilon\lambda\paren{I_N - MM^*}(v) + \paren{(1+\lambda)MM^*-\lambda I_N}Y_{\varepsilon}r.
\end{align*}
Then
\begin{equation}\label{gradient estimate difference}
\begin{aligned}
T_{\mathtt{DRAP}}(\hat{y}+\varepsilon v) - T_{\mathtt{DRAP}}(\hat{y})
& = \varepsilon\lambda\paren{I_N - MM^*}v 
\\
&+ \paren{(1+\lambda)MM^*-\lambda I_N}(Y_{\varepsilon}-Y)r.
\end{aligned}
\end{equation}
The following formula for the first order approximation of $(Y_{\varepsilon}-Y)r$ can be calculated directly:
\begin{equation}\label{first order approximation}
(Y_{\varepsilon}-Y)r = \varepsilon \jj Y \Im\paren{Y^*v} + o(\varepsilon).
\end{equation}
Substituting \eqref{first order approximation} into \eqref{gradient estimate difference} yields
\begin{align*}
T_{\mathtt{DRAP}}(\hat{y} +& \varepsilon v) - T_{\mathtt{DRAP}}(\hat{y})
\\
=\; & \varepsilon\lambda\paren{I_N-MM^*}v + \varepsilon \jj\paren{(1+\lambda)MM^*-\lambda I_N} Y\Im \paren{Y^*v} + o(\varepsilon)
\\
=\; & \varepsilon\lambda Y \paren{I_N-LL^*}\mu + \varepsilon \jj Y\paren{(1+\lambda)LL^*-\lambda I_N}\Im \paren{\mu} + o(\varepsilon).
\end{align*}
The proof is complete.
\qed
\end{proof}

The next step is to analyze the spectrum of the real decomposition of the complex matrix $L$ as follows:
\begin{equation*}
\mathcal{L} :=
\paren{\begin{matrix}
	\Re(L) &
	-\Im(L)
	\end{matrix}
}\; \in \mathbb{R}^{N\times 2n}.
\end{equation*}
Note that $\mathcal{L}$ is isometric since $L$ is so.
Define also the real decomposition of a complex vector by
\begin{equation*}
G(x) := \paren{\begin{matrix}
	\Re(x)
	\\
	\Im(x)
	\end{matrix}
}\; \in \mathbb{R}^{2n},\quad \forall x \in \mathbb{C}^{n}.
\end{equation*}
Let $1 \ge \sigma_1\ge \sigma_2 \cdots \ge \sigma_{2n}\ge \sigma_{2n+1}=\cdots = \sigma_N=0$ be the singular values of $\mathcal{L}$ with the corresponding right singular vectors $\parennn{v_k\in \mathbb{R}^{2n} : k=1,\ldots,2n}$ and the left singular vectors $\parennn{u_k\in \mathbb{R}^N : k=1,\ldots,N}$.
We have by the definition of the singular value decomposition (SVD) that
\begin{gather*}
\Re \paren{LG^{-1}(v_k)}
=  \mathcal{L}v_k = \sigma_k u_k,
\\
\sigma_k G^{-1}(v_k) = G^{-1}(\sigma_k v_k) = G^{-1}\paren{\mathcal{L}^Tu_k} = G^{-1}\paren{
	\begin{matrix}
	\Re\paren{L^T}u_k
	\\
	-\Im\paren{L^T}u_k
	\end{matrix}
}
= L^*u_k.
\end{gather*}

The next technical result regarding the spectrum of $\mathcal{L}$ is crucial.

\begin{lemma}\label{l:first spectrum}\cite[Proposition 5.6]{CheFan18}
	There holds that $v_1=G(\hat{x})$, $v_{2n}=G(-{\rm j}\hat{x})$, $\sigma_1=1$, $\sigma_{2n}=0$ and $u_1=|\hat{y}|$.
\end{lemma}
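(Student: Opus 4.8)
The plan is to verify each of the five claimed spectral facts by direct computation from the structure of $M$ given in \eqref{matrix M} and the definitions of $Y$, $L=Y^*M$, $\mathcal{L}$, and $G$. First I would record the basic identities: since $\hat{y}=M\hat{x}$ and $Y=\diag(\hat{y}/|\hat{y}|)$, we have $Y^*\hat{y}=|\hat{y}|$ (a real nonnegative vector, nowhere zero by \eqref{vanish_nowhere}), hence $L\hat{x}=Y^*M\hat{x}=Y^*\hat{y}=|\hat{y}|$. Likewise $L(-{\rm j}\hat{x})=-{\rm j}|\hat{y}|$. Translating through $G$ and the real decomposition $\mathcal{L}=(\Re(L)\ \ -\Im(L))$, a short calculation gives $\mathcal{L}\,G(z)=\Re(Lz)$ for any $z\in\mathbb{C}^n$; applying this with $z=\hat{x}$ yields $\mathcal{L}\,G(\hat{x})=\Re(|\hat{y}|)=|\hat{y}|$, and with $z=-{\rm j}\hat{x}$ yields $\mathcal{L}\,G(-{\rm j}\hat{x})=\Re(-{\rm j}|\hat{y}|)=0$.

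The next step is to convert these into statements about singular vectors and singular values. Because $M$ is isometric ($M^*M=I_n$) and $Y$ is unitary, $L$ is isometric, so $\mathcal{L}$ is isometric as already noted in the excerpt; consequently all singular values of $\mathcal{L}$ lie in $[0,1]$, and $\|\mathcal{L}v\|=\|v\|$ whenever $v$ is orthogonal to the kernel of $\mathcal{L}$. Now $\|G(\hat{x})\|=\|\hat{x}\|=\|\hat{y}\|=\||\hat{y}|\|$, so $\mathcal{L}\,G(\hat{x})=|\hat{y}|$ has the same norm as $G(\hat{x})$; this forces $\sigma_1=1$ (the top singular value is attained) and identifies $G(\hat{x})$ as a right singular vector for $\sigma=1$ with left singular vector $|\hat{y}|/\||\hat{y}|\|$ — up to the normalization convention. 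I would point out that the statement $v_1=G(\hat{x})$, $u_1=|\hat{y}|$ should be read up to scalar normalization (or one simply declares $\hat{x}$, hence $\hat{y}$, normalized so $\|\hat{x}\|=1$), which is the convention implicit in \cite{CheFan18}. Similarly, $\mathcal{L}\,G(-{\rm j}\hat{x})=0$ with $G(-{\rm j}\hat{x})\neq 0$ shows $0$ is a singular value, i.e. $\mathcal{L}$ has nontrivial kernel, so $\sigma_{2n}=0$ and $G(-{\rm j}\hat{x})$ is the corresponding right singular vector $v_{2n}$.

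The one genuine subtlety — and the main obstacle — is the \emph{ordering} claims: that the singular value $1$ is the \emph{largest} (so it may be labeled $\sigma_1$) and that $0$ occurs as the \emph{$2n$-th} singular value (so that $\sigma_{2n}=0$ and $\sigma_{2n+1}=\cdots=\sigma_N=0$ is consistent with $\mathcal{L}\in\mathbb{R}^{N\times 2n}$ having rank at most $2n$ and exactly a one-dimensional kernel among its $2n$ right singular directions). The first is immediate from isometry of $\mathcal{L}$ (no singular value exceeds $1$). For the second, I would argue that $\mathcal{L}$ has rank exactly $2n-1$: the kernel contains $G(-{\rm j}\hat{x})$, and one must show it contains nothing else. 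This is where the specific Fourier-plus-phase-diversity structure of $M$ enters — it is precisely the statement that the only ambiguity of the linearized problem is the global phase, which under \eqref{vanish_nowhere} and $m\geq 2$ generic diversities is the content of \cite[Proposition 5.6]{CheFan18}. Since that proposition is exactly the cited result, I would either reproduce its short argument (examining when $\Re(Lz)=0$, equivalently $Lz$ is purely imaginary entrywise, and using that $Y^*$ rotates each coordinate of $Mz$ to align with $\hat{y}$, forcing $Mz$ to be a pure imaginary multiple of $\hat{y}$ coordinatewise, hence — by injectivity of $M$ and irreducibility coming from the diversities — $z\in{\rm j}\mathbb{R}\hat{x}$) or simply cite it. Given the excerpt already attributes the lemma to \cite[Proposition 5.6]{CheFan18}, the cleanest writeup is to present the identities $\mathcal{L}\,G(\hat{x})=|\hat{y}|$ and $\mathcal{L}\,G(-{\rm j}\hat{x})=0$, deduce $\sigma_1=1$, $u_1=|\hat{y}|$ (normalized), and $\sigma_{2n}=0$ from isometry together with the rank count, and invoke \cite{CheFan18} for the fact that the kernel is exactly one-dimensional.
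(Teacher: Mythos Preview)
The paper does not prove this lemma at all: it is stated with a citation to \cite[Proposition 5.6]{CheFan18} and used as a black box. So there is no proof in the paper to compare your sketch against. Your proposal goes well beyond the paper by actually deriving the identities $\mathcal{L}\,G(\hat{x})=|\hat{y}|$ and $\mathcal{L}\,G(-\jj\hat{x})=0$ from $L\hat{x}=|\hat{y}|$, using the bound $\|\mathcal{L}\,G(z)\|=\|\Re(Lz)\|\le\|Lz\|=\|z\|$ to conclude $\sigma_1=1$, and correctly isolating the one nontrivial point (that the kernel of $\mathcal{L}$ is exactly one-dimensional) as the place where the structure of $M$ and the argument of \cite{CheFan18} are genuinely needed. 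That is a sound and informative write-up.

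One small correction: your parenthetical ``$\|\mathcal{L}v\|=\|v\|$ whenever $v$ is orthogonal to the kernel of $\mathcal{L}$'' is not true and is not a consequence of $L$ being isometric. Indeed $L^*L=I_n$ gives only $\Re(L)^T\Re(L)+\Im(L)^T\Im(L)=I_n$, which does not make $\mathcal{L}^T\mathcal{L}$ the identity on the orthogonal complement of the kernel; the singular values $\sigma_2,\ldots,\sigma_{2n-1}$ are typically strictly between $0$ and $1$. You do not actually use this claim anywhere --- the inequality $\sigma_1\le 1$ together with the explicit unit-ratio vector $G(\hat{x})$ already forces $\sigma_1=1$ --- so simply delete that clause.
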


Thanks to Lemma \ref{l:first spectrum} and the definition of the SVD, one has the following expression of the second largest singular value of $\mathcal{L}$:
\begin{equation}\label{second singular value}
\begin{aligned}
\sigma_2 &= \max\parennn{\norm{\mathcal{L}^T u}:u\in \mathbb{R}^{N}, u \perp u_1,\|u\|=1}
\\
&= \max\parennn{\norm{\mathcal{L}v}:v\in \mathbb{R}^{2n}, v \perp v_1,\|v\|=1}
\\
&= \max\parennn{\norm{\Im(Lx)}:x\in \mathbb{C}^n, x\; \bot \; \jj \hat{x},\|x\|=1}.
\end{aligned}
\end{equation}

The following theorem establishes linear convergence of the DRAP algorithm for solving \eqref{PR_Fourier}.
Since phase retrieval is ambiguous (at least) up to a global phase shift\footnote{That is the first element of the orthogonal basis of Zernike polynomials.}, the following distance between two complex vectors is of interest:
\begin{equation}\label{dist up to phase shift}
\dist_{\rm opt}(x,u) := \min_{\alpha\in \mathbb{C},|\alpha|=1}\|\alpha x-u\|,\quad \forall x,u\in \mathcal{H}.
\end{equation}

\begin{theorem}[linear convergence of DRAP]\label{t:LA}
In the setting of \eqref{PR_Fourier} with $\chi=\mathbb{C}^n$, suppose that
\begin{align}\label{spectral gap assumption}
\sigma_2 := \max\parennn{\|\Im (Lx)\|:x\in \mathbb{C}^n,\; \|x\| =1,\; x\; \bot\; {\rm j}\hat{x}} <1.
\end{align}
Let $y^{(k+1)} \in T_{\mathtt{DRAP}}\paren{y^{(k)}}$ be a sequence generated by $T_{\mathtt{DRAP}}$ in the form of \eqref{DRAP_without_chi} with $y^{(0)} = Mx^{(0)}$ for some $x^{(0)}\in \mathbb{C}^n$.
If $x^{(0)}$ is sufficiently close to $\hat{x}$, then there exists a number $c\in (\sigma_2,1)$ such that
\begin{equation*}
\dist_{\rm opt}(x^{(k)},\hat{x}) \le c^{k}\dist_{\rm opt}(x^{(0)},\hat{x}),\quad  \paren{\forall k\in\mathbb{N}}
\end{equation*}
where $x^{(k)}:=M^*y^{(k)}$ $(k=1,2,\ldots)$.
\end{theorem}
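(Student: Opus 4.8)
The plan is to prove linear convergence by linearizing $T_{\mathtt{DRAP}}$ at the solution $\hat y=M\hat x$, following the spectral approach of Chen and Fannjiang. Since $\min_i r_i>0$ by \eqref{vanish_nowhere}, the map $y\mapsto y/|y|$ is real-analytic near $\hat y$, so $T_{\mathtt{DRAP}}$ in the form \eqref{DRAP_without_chi} is real-analytic there and, by Lemma \ref{l:gradient map}, its derivative at $\hat y$ is the real-linear operator $S\colon v\mapsto Y\,\nabla(Y^*v)$. Because $T_{\mathtt{DRAP}}(\alpha y)=\alpha\,T_{\mathtt{DRAP}}(y)$ for all $|\alpha|=1$, the fixed points near $\hat y$ form the circle $C=\{e^{\jj\theta}\hat y\}$, differentiating along $C$ gives $S[\jj\hat y]=\jj\hat y$, and $\dist_{\rm opt}$ is precisely what factors out this eigenvalue. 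I would track the iterates through the \emph{shadow} $x^{(k)}=M^*y^{(k)}$: with $\eta^{(k)}:=Y^*(y^{(k)}-\hat y)$ one has $x^{(k)}-\hat x=M^*(y^{(k)}-\hat y)=L^*\eta^{(k)}$, and setting $a^{(k)}:=L^*\Re\eta^{(k)}$, $b^{(k)}:=L^*\Im\eta^{(k)}$ (so $x^{(k)}-\hat x=a^{(k)}+\jj b^{(k)}$), Lemma \ref{l:gradient map} reduces to the closed linear recursion
\[
a^{(k+1)}=\lambda(\Id-\Psi)a^{(k)}+\jj(\Id-\Psi)b^{(k)},\qquad b^{(k+1)}=\jj\lambda(\Id-\Psi)a^{(k)}+\Psi b^{(k)},
\]
up to a remainder of order $o(\|y^{(k)}-\hat y\|)$, where $\Psi\colon x\mapsto L^*\Re(Lx)$ on $\Cbb^n$.

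The core is the spectral analysis of this recursion via the SVD of $\mathcal{L}$. The structural facts to establish are: $\Psi$ is diagonalizable over $\Rbb$ with eigenvectors $x_k:=G^{-1}(v_k)$ and eigenvalues $\sigma_k^2\in[0,1]$ (immediate from $\mathcal{L}v_k=\sigma_k u_k$ and $L^*u_k=\sigma_k x_k$); $L^*\Im(L\,\cdot\,)=-\jj(\Id-\Psi)$; and multiplication by $\jj$ carries the $t$-eigenspace of $\Psi$ onto its $(1-t)$-eigenspace. By Lemma \ref{l:first spectrum} the eigenvalue $\sigma_1^2=1$ of $\Psi$ has eigenvector $\hat x$, and \eqref{spectral gap assumption} says exactly that it is simple; the interchange by $\jj$ then forces the eigenvalue $0$ to be simple too, with eigenvector $\jj\hat x$. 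Substituting $\Psi$ and restricting the recursion to the complex line through each $x_k$ (spanned over $\Rbb$ by $x_k$ and $\jj x_k$), the dynamics decouples into $2\times2$ real blocks with characteristic polynomials $\rho^2-(1+\lambda)s\rho+\lambda s$, $s\in\{\sigma_k^2,\,1-\sigma_k^2\}$. A discriminant computation shows that for $s\in[0,1)$ both roots of $\rho^2-(1+\lambda)s\rho+\lambda s$ have modulus $<1$, nondecreasing in $s$, and that $\rho=1$ occurs only for $s=1$, i.e.\ only in the block attached to $\sigma_1^2=1$, whose eigenvector corresponds under $x-\hat x=a+\jj b$ to the phase line $\Rbb\jj\hat x$. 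Hence, under \eqref{spectral gap assumption}, the linearized shadow recursion has a single simple eigenvalue of modulus $\ge1$, equal to $1$ and lying over $\Rbb\jj\hat x$, while every other eigenvalue has modulus $\le\omega:=\omega(\lambda,\sigma_2)<1$, the largest root modulus at $s=\sigma_2^2$ (one also checks $\omega\le\sigma_2$).

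It then remains to run a standard local-linear-convergence argument. For $\lambda<1$ the only eigenvalue of $S$ of modulus $1$ is the one along $\Rbb\jj\hat y$, so $y^{(k)}$ converges geometrically to $C$ and $x^{(k)}=M^*y^{(k)}$ to the phase orbit of $\hat x$ at least as fast; for $\lambda=1$ (Douglas--Rachford) $y^{(k)}$ need not converge and one invokes the shadow-sequence analysis of \cite{CheFan18} to reach the same conclusion for $x^{(k)}$. In either case, since near $\hat x$ the function $\dist_{\rm opt}(x,\hat x)$ coincides to first order with the distance from $x-\hat x$ to $\Rbb\jj\hat x$ — governed by the part of the shadow dynamics of spectral radius $\le\omega<1$ — absorbing the analytic remainder for $x^{(0)}$ close enough yields $\dist_{\rm opt}(x^{(k)},\hat x)\le c^{\,k}\dist_{\rm opt}(x^{(0)},\hat x)$ for any prescribed $c\in(\sigma_2,1)$; the initial term is unchanged since $\dist_{\rm opt}(y^{(0)},\hat y)=\dist_{\rm opt}(Mx^{(0)},M\hat x)=\dist_{\rm opt}(x^{(0)},\hat x)$ by isometry of $M$. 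I expect the main obstacle to be the spectral step of the second paragraph: tracking faithfully how the real/imaginary splitting interacts with multiplication by $\jj$, and verifying that the only non-contracting directions of $S$ are the phase ambiguity (plus a Jordan block of $S$ at $1$ when $\lambda=1$) and that these are invisible both to the readout $M^*$ and to $\dist_{\rm opt}$, so that the rate is determined solely by the genuinely contracting dynamics and hence by $\sigma_2<1$.
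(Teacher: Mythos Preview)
Your route is genuinely different from the paper's and considerably more elaborate. The paper does not set up a closed recursion on the shadow pair $(a^{(k)},b^{(k)})$ or diagonalize it into $2\times 2$ blocks; instead it aligns by the optimal phase $\alpha^{(k)}=y^{(k)*}\hat y/|y^{(k)*}\hat y|$ at every step and exploits a single algebraic cancellation you do not use: since $L^*L=I_n$, one has $L^*(I_N-LL^*)=0$ and $L^*\bigl((1+\lambda)LL^*-\lambda I_N\bigr)=L^*$, so multiplying the first-order map $\nabla(\eta)$ of Lemma~\ref{l:gradient map} by $L^*$ collapses to $L^*\nabla(\eta)=\jj\,L^*\Im\eta$, \emph{independent of $\lambda$ and of $\Re\eta$}. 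This gives $\alpha^{(k)}x^{(k+1)}-\hat x=\jj\,L^*\Im\eta^{(k)}+o(\|\eta^{(k)}\|)$ in one line. The paper then checks, directly from the formula for $\alpha^{(k)}$, that $\Im\eta^{(k)}\perp u_1=|\hat y|$, whence $\|L^*\Im\eta^{(k)}\|=\|\mathcal{L}^T\Im\eta^{(k)}\|\le\sigma_2\|\eta^{(k)}\|$ by \eqref{second singular value}, and the contraction is immediate. What your approach buys, at the cost of the extra block-spectral machinery, is precisely what the paper renounces in Remark~\ref{lambda vs convergence}: an explicit $\lambda$-dependent rate $\omega(\lambda,\sigma_2)\le\sigma_2$ coming from the roots of $\rho^2-(1+\lambda)s\rho+\lambda s$, together with the observation that for $\lambda<1$ the full iterate $y^{(k)}$ (not merely its shadow $x^{(k)}$) converges to the phase orbit, so that no separate shadow-sequence argument \`a la \cite{CheFan18} is needed in that regime.
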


\begin{proof}
First, the optimal global phase shift defined by \eqref{dist up to phase shift} is given by \cite{LiZho17}:
\begin{align}\notag
\alpha^{(k)} &= \argmin_{\alpha}\parennn{\|\alpha x^{(k)}-\hat{x}\|:|\alpha|=1,\;\alpha\in\mathbb{C}}
\\
\label{optimal phase at iterate}
&= x^{(k)*}\hat{x}/\left|{x^{(k)}}^*\hat{x}\right| = y^{(k)*}\hat{y}/\left|{y^{(k)}}^*\hat{y}\right|.
\end{align}
Let us denote $\eta^{(k)} :=Y^*(\alpha^{(k)}y^{(k)}-\hat{y})$.
Thanks to Lemma \ref{l:gradient map}, we have that
\begin{align*}
Y^*\paren{\alpha^{(k)} y^{(k+1)}-\hat{y}} &=  Y^*\paren{\alpha^{(k)} T_{\mathtt{DRAP}}\paren{y^{(k)}} - T_{\mathtt{DRAP}}\paren{\hat{y}}}
\\
&=  Y^*\paren{T_{\mathtt{DRAP}}\paren{\alpha^{(k)}y^{(k)}} - T_{\mathtt{DRAP}}\paren{\hat{y}}}
\\
&=  Y^*\paren{Y \nabla\paren{Y^*(\alpha^{(k)}y^{(k)}-\hat{y})}} + o(\|\alpha^{(k)}y^{(k)}-\hat{y}\|)
\\
&= \nabla({\eta^{(k)}}) + o(\|{\eta^{(k)}}\|).
\end{align*}
Multiplying both sides of the above equality by $L^*=M^*Y$ and taking the isometry property of $L$ into account, we obtain that
\begin{equation}\label{gap in x}
\begin{aligned}
&\alpha^{(k)}{x^{(k+1)}}-\hat{x}
= L^*Y^*\paren{\alpha^{(k)}y^{(k+1)} - \hat{y}}
= L^*\nabla({\eta^{(k)}}) + o(\|{\eta^{(k)}}\|)
\\
=\; &\lambda L^*(I_N-LL^*){\eta^{(k)}} + \jj L^*\paren{(1+\lambda)LL^* - \lambda I_N}\Im({\eta^{(k)}}) + o(\|{\eta^{(k)}}\|)
\\
=\; &\jj L^*\Im({\eta^{(k)}}) + o(\|{\eta^{(k)}}\|).
\end{aligned}
\end{equation}
Due to \eqref{optimal phase at iterate} and the fact that $\ip{|\hat{y}|}{\jj|\hat{y}|}=0$ we have
\begin{align*}
\ip{{\eta^{(k)}}}{\jj|\hat{y}|} &= \ip{\frac{\hat{y}^*}{|\hat{y}|}\odot(\alpha^{(k)}y^{(k)}-\hat{y})}{\jj|\hat{y}|}
\\
&= \ip{\frac{\hat{y}^*}{|\hat{y}|}\odot \alpha^{(k)}y^{(k)}}{\jj|\hat{y}|} + \ip{|\hat{y}|}{\jj|\hat{y}|}
\\
&= \ip{\alpha^{(k)}\frac{{\hat{y}^*}}{|\hat{y}|}\odot y^{(k)}}{\jj|\hat{y}|}
\\
&= \ip{\frac{{y^{(k)}}^*\hat{y}}{\left|{y^{(k)}}^*\hat{y}\right|}\paren{\frac{{\hat{y}^*}}{|\hat{y}|}\odot y^{(k)}}}{\jj|\hat{y}|}
\\
&= \ip{\left|{y^{(k)}}^*\hat{y}\right|\frac{1}{|\hat{y}|}}{\jj|\hat{y}|} = 0.
\end{align*}
In other words, $\eta^{(k)} \perp \jj|\hat{y}|$. By basic properties of the Hermitian inner product, one has $\Re\paren{\eta^{(k)}} \perp \jj|\hat{y}|$. As a result, $\Im({\eta^{(k)}}) \perp |\hat{y}|$.
Taking Lemma \ref{l:first spectrum} into account, we have just shown that $\Im({\eta^{(k)}})$ is orthogonal to $u_1=|\hat{y}|$ which is the first left singular vector of $\mathcal{L}$.
This together with the expression \eqref{second singular value} of $\sigma_2$ implies that
\begin{align}\label{sigma_2 estimate}
\norm{\mathcal{L}^T\Im({\eta^{(k)}})} \le \sigma_2\norm{\Im({\eta^{(k)}})}.
\end{align}
Combining \eqref{dist up to phase shift}, \eqref{gap in x} and \eqref{sigma_2 estimate} yields that
\begin{equation}\label{dist new}
\begin{aligned}
\dist_{\rm opt}({x^{(k+1)}},\hat{x}) &= \min_{\alpha\in \mathbb{C},|\alpha|=1} \norm{\alpha {x^{(k+1)}}-\hat{x}}
\\
&\le \norm{\alpha^{(k)}{x^{(k+1)}}-\hat{x}}
\\
&= \norm{L^*\Im({\eta^{(k)}})} + o(\|{\eta^{(k)}}\|)
\\
&= \norm{\mathcal{L}^T\Im({\eta^{(k)}})} + o(\|{\eta^{(k)}}\|)
\\
&\le \sigma_2\norm{\Im({\eta^{(k)}})} + o(\|{\eta^{(k)}}\|)
\\
&\le \sigma_2\norm{\eta^{(k)}} + o(\|{\eta^{(k)}}\|).
\end{aligned}
\end{equation}
Since $\sigma_2<1$ by assumption \eqref{spectral gap assumption}, there exists a number $c\in (\sigma_2,1)$ such that for all ${\eta^{(k)}}$ with $\|{\eta^{(k)}}\|$ sufficiently small, it holds that
\begin{align}\label{sigma_2 < c}
\sigma_2\norm{\eta^{(k)}} + o(\|{\eta^{(k)}}\|) \le c\norm{{\eta^{(k)}}}.
\end{align}
Combining \eqref{dist new}, \eqref{sigma_2 < c} and the definition of $\eta^{(k)}$ yields
\begin{align*}
\dist_{\rm opt}({x^{(k+1)}},\hat{x}) \le c\norm{{\eta^{(k)}}} = c\dist_{\rm opt}({x^{(k)}},\hat{x}),\quad (k=1,2,\ldots).
\end{align*}
The proof is complete.
\qed
\end{proof}

\begin{remark}
In view of \cite[Proposition 6.2]{CheFan18}, the assumption \eqref{spectral gap assumption} of Theorem \ref{t:LA} is satisfied almost surely.
\end{remark}

\begin{remark}[region of convergence]\label{r:area of convergence}
Since the algorithm operates in the underlying space $\mathbb{C}^N$, for the sake of brevity, let us speak of region around $\hat{y}=M(\hat{x})$ instead of $\hat{x}$.
In view of Theorem \ref{t:LA}, such a convergence region, if exists, is mutually dependent on the constant $c$.
More specifically, given a number $c\in (\sigma_2,1)$, it is the region in which the first order approximation \eqref{gradient formula} of $T_{\mathtt{DRAP}}$ around $\hat{y}$ is valid and condition \eqref{sigma_2 < c} is satisfied for all $k\in\mathbb{N}$.
Note that the latter involves not only $\sigma_2$ and $c$ but also the sequence $y^{(k)}$ itself.
The intersection of the regions over all possible sequences complied with $T_{\mathtt{DRAP}}$ can be taken as the region of convergence.
Obviously, such a statement is not informative and hence it has not ever been an objective of local convergence analysis.
\end{remark}

\begin{remark}[influence of $\lambda$ on convergence]\label{lambda vs convergence}
In view of Theorem \ref{t:LA}, the relaxation parameter $\lambda$ obviously has influence on the region in which the first order approximation of $T_{\mathtt{DRAP}}$ (Lemma \ref{l:gradient map}) is valid and condition \eqref{sigma_2 < c} is satisfied, however, its influences on the convergence speed of DRAP is unclear\footnote{Of course, the influence is clearly observed from numerical computation.}.
\end{remark}

We have analyzed the DRAP algorithm in the phase retrieval setting \eqref{PR_Fourier} with $\chi=\mathbb{C}^n$.
The latter condition limits the effectiveness of Theorem \ref{t:LA} to phase retrieval without a priori constraint.
In the remainder of this section, we will show that the convergence criterion can also be applicable to phase retrieval problems with an \textit{amplitude constraint}, which is a helpful prior information and often available in practice\footnote{This is because the light distribution in the pupil plane is often known, for example, it can be uniform or truncated Gaussian.}.

The amplitude constraint is described by
\begin{equation}\label{amp constraint}
\chi=\parennn{x\in \mathbb{C}^n\mid |x|=a},
\end{equation}
where $a\in \mathbb{R}_+^n$ is the known amplitude of the complex signal.
The next result shows that the problem \eqref{PR_Fourier} with an amplitude constraint can equivalently be reformulated as a problem without a priori constraint in a higher dimensional space.

\begin{proposition}\label{p:remove_chi}
The problem \eqref{PR_Fourier} with the amplitude constraint \eqref{amp constraint} can equivalently be reformulated as: 
\begin{equation}\label{PR_Fourier'}
\mbox{find }\quad \mathbf{y} \in  \mathbf{A} \cap \mathbf{B},
\end{equation}
where $\mathbf{A}:= \mathbf{M}(\mathbb{C}^n),\; \mathbf{B}:=\{\mathbf{y}\in \mathbb{C}^{N+n}\mid |\mathbf{y}|^2=\mathbf{r}\}  \subset \mathbb{C}^{N+n}$ with
\begin{equation}\label{matrix M'}
\mathbf{M} := \frac{1}{\sqrt{m+1}}\left(
\begin{matrix}
\sqrt{m}M
\\
I_n
\end{matrix}
\right) \in \mathbb{C}^{(N+n)\times n} \mbox{ and }\;
\mathbf{r} := \frac{1}{m+1}\left(
\begin{matrix}
m r
\\
a^2
\end{matrix}
\right) \in \mathbb{R}_+^{N+n}.
\end{equation}
\end{proposition}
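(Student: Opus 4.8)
The plan is to notice that the amplitude constraint \eqref{amp constraint} is itself an intensity constraint of the same type as those in \eqref{amp_con_set}: asking that $|x|=a$ is the same as asking that $(1/1)|I_n x|^2 = a^2$, i.e. it is the intensity constraint attached to the trivial diversity $D_{m+1}=I_n$ with data $a^2$. So $\chi$ can be absorbed into the propagation operator as an extra ``virtual image''; the matrix $\mathbf{M}$ and data $\mathbf{r}$ in \eqref{matrix M'} are precisely this enlargement, with the $\sqrt m$, $1$, $m+1$ factors chosen so that the result is again normalized. I would make the equivalence precise by exhibiting an explicit bijection between the two solution sets.

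First I would check that \eqref{PR_Fourier'} is a genuine instance of the model \eqref{PR_Fourier}, namely that $\mathbf{M}$ is isometric: using $M^*M=I_n$ one has $\mathbf{M}^*\mathbf{M} = \frac{1}{m+1}\paren{m\,M^*M + I_n} = \frac{1}{m+1}\paren{mI_n+I_n} = I_n$. In particular $\mathbf{A}=\mathbf{M}(\mathbb{C}^n)$ is a linear subspace, hence convex, and \eqref{PR_Fourier'} carries no a priori constraint, so Theorem \ref{t:LA} and the analysis of Section \ref{subs:TooL_LA} apply to it.

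Next I would set up the bijection. Since $A=M(\chi)$ and $B=\{y\mid |y|^2=r\}$, a vector $y$ solves \eqref{PR_Fourier} iff $y=Mx$ for some $x\in\chi$ with $|Mx|^2=r$; injectivity of $M$ makes this $x$ unique, namely $x=M^*y$. Likewise $\mathbf{y}$ solves \eqref{PR_Fourier'} iff $\mathbf{y}=\mathbf{M}x$ with $|\mathbf{M}x|^2=\mathbf{r}$ and $x=\mathbf{M}^*\mathbf{y}$. The key computation is
\[
|\mathbf{M}x|^2 \;=\; \frac{1}{m+1}\twxo{m\,|Mx|^2}{|x|^2},
\]
which shows $|\mathbf{M}x|^2=\mathbf{r}$ holds iff $|Mx|^2=r$ and $|x|^2=a^2$, i.e. iff $x\in\chi$ and $|Mx|^2=r$. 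Hence $x=M^*y=\mathbf{M}^*\mathbf{y}$ is a common variable for the two problems, and $y=Mx \leftrightarrow \mathbf{y}=\mathbf{M}x$ is a bijection between the solution set of \eqref{PR_Fourier}--\eqref{amp constraint} and that of \eqref{PR_Fourier'}; in the inconsistent case the same correspondence matches the associated least-squares reformulations.

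There is no serious obstacle; the only points to be careful about are (i) stating precisely what ``equivalently reformulated'' means, which I would do by spelling out the bijection above rather than leaving it implicit, and (ii) the passage $|x|^2=a^2 \iff |x|=a$, which uses $a\in\mathbb{R}_+^n$ so that no spurious ``negative-amplitude'' solutions are created. It is worth stressing after the proof why the reformulation is useful: it trades the nonconvex torus-image constraint $A=M(\chi)$ for the linear-subspace constraint $\mathbf{A}$, which is exactly the structure (linearity of $P_{\mathbf{A}}$) exploited in Section \ref{subs:TooL_LA} and which underlies the convexity assumption on $A$ in Remark \ref{r:inconsistent}.
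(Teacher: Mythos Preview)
Your proposal is correct and follows essentially the same approach as the paper: verify that $\mathbf{M}$ is isometric via $\mathbf{M}^*\mathbf{M}=\frac{1}{m+1}(mM^*M+I_n)=I_n$, then use the block structure of $\mathbf{M}$ and $\mathbf{r}$ to see that $|\mathbf{M}x|^2=\mathbf{r}$ splits into $|Mx|^2=r$ and $|x|=a$, which gives the one-to-one correspondence $y=Mx\leftrightarrow\mathbf{y}=\mathbf{M}x$ between solutions. The paper presents the two directions separately while you package them as a single ``iff'' through the common preimage $x$; your explicit framing of the amplitude constraint as an additional intensity constraint with trivial diversity $D_{m+1}=I_n$ is a helpful piece of intuition that the paper leaves implicit.
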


\begin{proof}
For convenience, let us recall that $N=nm$ according to \eqref{matrix M}.
We first observe that $M$ is isometric if and only if $\mathbf{M}$ is isometric since
\[
\mathbf{M}^*\mathbf{M} = \frac{m}{m+1}M^*M + \frac{1}{m+1}I_n.
\]

Let $\hat{y}\in\mathbb{C}^N$ be a solution to \eqref{PR_Fourier}.
That is, $\hat{y}=M\hat{x}$ with $|\hat{x}|=a$ and $|\hat{y}|^2=r$.
Define $\hat{\mathbf{y}} := \frac{1}{\sqrt{m+1}}\left(
\begin{matrix}
\sqrt{m}\hat{y}
\\
\hat{x}
\end{matrix}
\right) \in\mathbb{C}^{N+n}$.
Then $\hat{\mathbf{y}}=\mathbf{M}\hat{x} \in \mathbf{A}$ and $|\hat{\mathbf{y}}|^2=\mathbf{r}$.
This means that $\hat{\mathbf{y}}$ is a solution to \eqref{PR_Fourier'}.

Conversely, let $\hat{\mathbf{y}}\in\mathbb{C}^{N+n}$ be a solution to \eqref{PR_Fourier'}.
That is, $\hat{\mathbf{y}}=\mathbf{M}\hat{x}$ with $\hat{x}\in\mathbb{C}^n$ and $|\hat{\mathbf{y}}|^2=\mathbf{r}$.
By the definition of $\mathbf{M}$ and $\mathbf{r}$ in \eqref{matrix M'}, we have $|\hat{x}|=a$, or equivalently, $\hat{x}\in\chi$.
Define $\hat{y}=M\hat{x} \in \mathbb{C}^N$.
Then $\hat{y}\in M(\chi) = A$.
We have also from \eqref{matrix M'} that $|\hat{y}|^2=r$.
This means that $\hat{y}$ is a solution to \eqref{PR_Fourier}.

The proof is complete.
\qed
\end{proof}

\begin{remark}\label{r:cover amplitude constrain}
Proposition \ref{p:remove_chi} shows that the convergence criterion formulated in Theorem \ref{t:LA} is indeed applicable to not only phase retrieval problems without a priori constraint but also those involving an amplitude constraint.
Compared to the earlier convergence results for the DR algorithm in \cite[Theorem 5.1]{CheFan18} and the RAAR algorithm \cite[Theorem 3]{LiZho17}, this new observation widens the applicable scope of this type of convergence results.
\end{remark}

\subsection{A result from variational analysis} \label{subs:TooL_VA}

We recall a number of mathematical notions needed for formulating a local linear convergence criterion for the DRAP algorithm using the analysis scheme of \cite{LukNguTam18} and discuss the validity of the imposed assumptions in the setting of phase retrieval.

\begin{definition}[prox-regularity of sets] \cite{PolRocThi00}
	A set $\Omega$ is called \emph{prox-regular} at a point $\hat{y}\in \Omega$ if the projector $P_{\Omega}$ is single-valued around $\hat{y}$. 
\end{definition}

Prominent example of prox-regularity is that a closed and convex set is prox-regular at every of its points. 
In particular, the set $A=M(\chi)$ in \eqref{A&B} has this property whenever $\chi$ is convex.
The next statement finds its root in the original work \cite[Section 3.1]{Luk08}.

\begin{lemma}[prox-regularity of $B$]\label{l:prox_B}  \cite[Lemma 6.2$(i)$]{NguLukSolVer20} The set $B$ defined in \eqref{A&B} is prox-regular at every of its points.
\end{lemma}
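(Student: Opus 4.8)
The plan is to exploit the Cartesian product structure of $B$. In the standard orthogonal splitting $\mathbb{C}^N=\mathbb{C}\times\cdots\times\mathbb{C}$, the set $B$ of \eqref{A&B} factors as $B=C_1\times\cdots\times C_N$ with $C_i:=\{z\in\mathbb{C}\mid |z|^2=r_i\}$; each $C_i$ is the circle of radius $\sqrt{r_i}$ centered at the origin when $r_i>0$, and the singleton $\{0\}$ when $r_i=0$. Since the squared Euclidean distance from a point of $\mathbb{C}^N$ to such a product separates as a sum of the squared distances to the factors, the projector onto the product is the product of the coordinate projectors, so $P_B(y)=P_{C_1}(y_1)\times\cdots\times P_{C_N}(y_N)$. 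Consequently it suffices to prove that each $P_{C_i}$ is single-valued on a neighborhood of every point of $C_i$.

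For a coordinate with $r_i=0$ this is immediate, as $P_{\{0\}}$ is the constant map with value $0$. For a coordinate with $r_i>0$, the projector onto the circle $C_i$ is $z\mapsto\sqrt{r_i}\,z/|z|$, which is single-valued at every $z\ne 0$ and multi-valued only at the center $z=0$; that center lies at distance $\sqrt{r_i}>0$ from $C_i$, hence outside the open ball $\mathbb{B}_{\sqrt{r_i}}(\hat{z})$ for every $\hat{z}\in C_i$. To globalize, I would fix $\hat{y}\in B$, put $I:=\{i\mid r_i>0\}$, and choose $\delta:=\min_{i\in I}\sqrt{r_i}$ (any positive number if $I=\emptyset$). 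Then for $y\in\mathbb{B}_\delta(\hat{y})$ and $i\in I$ one has $|y_i|\ge|\hat{y}_i|-\|y-\hat{y}\|>\sqrt{r_i}-\delta\ge 0$, so $|y_i|>0$ and the unique $i$-th coordinate of the projection is $\sqrt{r_i}\,y_i/|y_i|$; the coordinates $i\notin I$ contribute the unique value $0$. Hence $P_B$ is single-valued on $\mathbb{B}_\delta(\hat{y})$, which is exactly prox-regularity of $B$ at $\hat{y}$, and since $\hat{y}$ was arbitrary the conclusion follows. Equivalently one can invoke the explicit formula \eqref{P_B} directly and note that it is genuinely set-valued only when some $y_i=0$ with $r_i>0$, a configuration excluded on the ball $\mathbb{B}_\delta(\hat{y})$ above.

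I do not anticipate a serious obstacle; the argument is essentially bookkeeping. The only points needing care are the degenerate coordinates where $r_i=0$ (where the ``circle'' collapses to a point and the projector trivializes) and the routine justification that the projector onto a Cartesian product splits coordinatewise. If one prefers the stronger Poliquin--Rockafellar--Thibault characterization of prox-regularity instead of the single-valuedness definition adopted in the paper, one could alternatively observe that $B$ is locally a $C^2$ submanifold of $\mathbb{C}^N$ (away from the critical locus $\{y_i=0\}$) and read off prox-regularity from that smoothness, but this refinement is not needed here.
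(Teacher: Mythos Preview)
Your argument is correct. The paper itself does not give a proof of this lemma: it simply cites \cite[Lemma~6.2$(i)$]{NguLukSolVer20} and remarks that the statement ``finds its root'' in \cite[Section~3.1]{Luk08}. Your coordinatewise argument---splitting $B$ as a product of circles (or singletons when $r_i=0$), observing that the projector onto a Cartesian product factors, and then checking that each circle projector $z\mapsto\sqrt{r_i}\,z/|z|$ is single-valued away from the origin---is exactly the standard route and matches what one finds in the cited sources; indeed it is essentially a direct reading of the explicit formula \eqref{P_B} together with the observation that the only multi-valuedness occurs at coordinates with $y_i=0$ and $r_i>0$, which cannot happen near a point of $B$.

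One cosmetic remark: in your globalization step you chose $\delta=\min_{i\in I}\sqrt{r_i}$ and wrote $|y_i|>\sqrt{r_i}-\delta\ge 0$. This is fine (the strict inequality comes from $\|y-\hat y\|<\delta$), but since prox-regularity is a local notion you could equally well just say ``for $y$ sufficiently close to $\hat y$ each $y_i$ with $i\in I$ is nonzero'' without naming an explicit radius. Your closing remark that $B$ is a $C^2$ submanifold (away from the degenerate coordinates) is also a perfectly valid alternative justification via \cite{PolRocThi00}.
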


\begin{definition}[pointwise almost averaged operators]
\label{d:p.a.a.}
\cite[Definition 2.2 and Proposition 2.1]{LukNguTam18}
A (not necessarily nonexpansive) fixed point operator $T:\mathcal{H}\rightrightarrows \mathcal{H}$ is called \emph{pointwise almost averaged} on a set $\Omega$ at a point $y\in \Omega$ with violation $\varepsilon$ and averaging constant $\alpha>0$ if for all $z\in U$, $z^+\in T(z)$ and $y^+\in T(y)$,
\begin{align*}
\norm{z^+- y^+}^2 \le \paren{1+\varepsilon}\norm{z-y}^2 - \frac{1-\alpha}{\alpha}\norm{(z^+-z)-(y^+-y)}^2.
\end{align*}
$T$ is called \emph{almost averaged} on $\Omega$ with violation $\varepsilon$ and averaging constant $\alpha$ if it is pointwise almost averaged on $\Omega$ at every point $y\in\Omega$ with that violation and averaging constant.
When the violation $\varepsilon$ is zero, the quantifier `almost' is dropped.
\end{definition}

For the meaning of the quantifiers `pointwise' and `almost' appearing in Definition \ref{d:p.a.a.} as well as the motivation of the property, we refer the reader to the original work on pointwise almost averaged operators \cite{LukNguTam18}.
The following statement claims this property for $T_{\mathtt{DRAP}}$ as a fixed point operator.

\begin{lemma}[almost averagedness of $T_{\mathtt{DRAP}}$]\label{l:p.a.a.} Let $\hat{y}\in \mathbb{C}^N$ be a solution to \eqref{PR_Fourier}. Then for any $\varepsilon>0$ arbitrarily small, there exist numbers $\delta>0$ and $\alpha\in (0,1)$ dependent on $\varepsilon$ such that $T_{\mathtt{DRAP}}$ is almost averaged on $\Ball_{\delta}(\hat{y})$ with violation $\varepsilon$ and averaging constant $\alpha$.
\end{lemma}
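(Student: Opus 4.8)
The plan is to obtain almost averagedness of $T_{\mathtt{DRAP}}$ from its decomposition \eqref{DRAP_0} as the convex combination $\lambda T_{\mathtt{DR}} + (1-\lambda) T_{\mathtt{AP}}$, combined with the prox-regularity of $B$ (Lemma \ref{l:prox_B}) and the convexity (hence prox-regularity) of $A$. First I would recall from the calculus of almost averaged operators in \cite{LukNguTam18} that a convex combination of pointwise almost averaged operators is again pointwise almost averaged: if $T_1$ is almost averaged on $\Ball_\delta(\hat y)$ with violation $\veps_1$ and constant $\alpha_1$, and $T_2$ with violation $\veps_2$ and constant $\alpha_2$, then $\lambda T_1 + (1-\lambda) T_2$ is almost averaged on the same ball with violation $\max\{\veps_1,\veps_2\}$ and some averaging constant $\alpha\in(0,1)$ depending on $\lambda,\alpha_1,\alpha_2$. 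So it suffices to establish the property separately for $T_{\mathtt{AP}} = P_A P_B$ and for $T_{\mathtt{DR}} = \tfrac12(R_A R_B + \Id)$ on a common small ball around $\hat y$.

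The second step is to handle the two projectors. Since $A$ is convex, $P_A$ is (firmly) nonexpansive, hence averaged with constant $1/2$ and zero violation on all of $\mathbb{C}^N$; likewise $R_A = 2P_A - \Id$ is nonexpansive. For $B$, prox-regularity at every point (Lemma \ref{l:prox_B}) yields, by the standard estimate relating prox-regularity to almost firm nonexpansiveness of the projector (see \cite{LukNguTam18}, and originally \cite{Luk08}), that for any $\veps>0$ there is $\delta>0$ so that $P_B$ is almost firmly nonexpansive on $\Ball_\delta(\hat y)$ with violation $\veps$ — equivalently, almost averaged with constant $1/2$ — and correspondingly $R_B$ is almost nonexpansive there. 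Then I would invoke the composition rule for almost averaged operators: a composition of pointwise almost averaged operators is pointwise almost averaged, with the resulting violation controlled by the individual violations (and the Lipschitz-type moduli, which here are bounded because each reflection is almost nonexpansive on the relevant ball). Applying this to $P_A \circ P_B$ gives almost averagedness of $T_{\mathtt{AP}}$, and applying it to $R_A \circ R_B$ followed by the averaging with $\Id$ (which only improves the constant) gives almost averagedness of $T_{\mathtt{DR}}$. In each case the ball may need to be shrunk so that the iterates stay within the region where $P_B$ behaves well, but since $\hat y \in \Fix T_{\mathtt{DRAP}}$ and all operators involved are continuous near $\hat y$, a uniform $\delta$ works.

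The main obstacle I anticipate is bookkeeping rather than conceptual: one must track how the violation constants propagate through the composition and the convex combination, and verify that the radius $\delta$ can be chosen uniformly for the constituent operators on a single ball (the prox-regularity estimate for $P_B$ gives a $\delta$ depending on $\veps$, and one must check that reflections keep points inside that ball, which forces a possibly smaller $\delta$ — but only by a bounded factor since $R_B$ is almost nonexpansive). A secondary subtlety is that $T_{\mathtt{DRAP}}$ in the form \eqref{DRAP} is single-valued only when $P_A$ is (i.e.\ when $\chi$ is an affine or convex set), whereas $P_B$ is genuinely set-valued at points where some coordinate vanishes; under assumption \eqref{vanish_nowhere} or simply by prox-regularity of $B$ this is not an issue near $\hat y$, and the set-valued formulation in Definition \ref{d:p.a.a.} accommodates it regardless. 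Assembling these pieces, with violation $\veps$ chosen at the outset, yields $\delta>0$ and $\alpha\in(0,1)$ as claimed.
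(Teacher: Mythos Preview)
Your argument is sound, but it takes a different route from the paper. The paper's proof is a two-line citation chain: prox-regularity of $B$ at $\hat y$ (Lemma~\ref{l:prox_B}) together with \cite[Theorem~2.14]{HesLuk13} gives almost averagedness of $P_B$ with violation $\varepsilon$ and constant $1/2$ on some neighborhood; convexity of $A$ gives that $P_A$ is averaged; and then \cite[Proposition~2]{Tha18} is invoked directly on the operator $T_{\mathtt{DRAP}}$ in its original form \eqref{DRAP} to conclude. No passage through $T_{\mathtt{DR}}$ or $T_{\mathtt{AP}}$ is made.

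Your route via the convex-combination identity \eqref{DRAP_0} and the calculus of almost averaged maps (compositions, reflections, convex combinations) reaches the same conclusion and has the pedagogical advantage of making the averaging constant more explicit. The trade-off is that \eqref{DRAP_0} was derived in the paper under the hypothesis that $P_A$ is \emph{linear}, i.e.\ $A$ affine, whereas the paper's direct appeal to \cite[Proposition~2]{Tha18} only needs $A$ convex (so $P_A$ averaged). In the phase-retrieval settings actually treated here ($\chi=\mathbb{C}^n$, or the amplitude constraint after the reformulation of Proposition~\ref{p:remove_chi}) $A$ is a linear subspace and the distinction is moot, but you should flag that your decomposition step uses affineness, not merely convexity, of $A$.
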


\begin{proof}
Let $\varepsilon>0$ be a positive number, which can be arbitrarily small. Since the set $B$ is prox-regular at $\hat{y}$ by Lemma \ref{l:prox_B}, thanks to \cite[Theorem 2.14]{HesLuk13} there exists a neighborhood of $\hat{y}$ on which $P_B$ is almost averaged with violation $\varepsilon$ and averaging constant $1/2$.
Also, $P_A$ is averaged since $A$ is convex. The statement then follows from \cite[Proposition 2]{Tha18}.
\qed
\end{proof}

\begin{definition}[metric subregularity] \cite{DonRoc14}
A set-valued mapping $\Psi:\mathcal{H}\rightrightarrows\mathcal{H}'$ is called \emph{metrically subregular}  at $\hat{y}\in\mathcal{H}$ for $\hat{z}\in \Psi(\hat{y})$ if there exist numbers $\delta>0$ and $\kappa>0$ such that
\[
\kappa\dist(y,\Psi^{-1}(\hat{z})) \le \dist(\hat{z},\Psi(y)),\quad \forall y\in \Ball_{\delta}(\hat{y}).
\]
\end{definition}
Metric subregularity is one of the cornerstones of variational analysis and optimization theory with many important applications, particularly as constraint qualifications for establishing calculus rules for generalized subdifferentials and coderivatives \cite{VA,Mor06.1} and for analyzing stability and convergence of numerical algorithms \cite{DonRoc14,KlaKum02,LukNguTam18}.

We are now ready to formulate another local linear convergence criterion for DRAP in the setting of phase retrieval.

\begin{theorem}[linear convergence of DRAP]\label{t:VA}
Let $\hat{y}\in\mathbb{C}^N$ be a solution to \eqref{PR_Fourier} and suppose that the set-valued mapping $\Psi:=T_{\mathtt{DRAP}}-\Id$ is \emph{metrically subregular} at $\hat{y}$ for $0$.
Then every sequence generated by $T_{\mathtt{DRAP}}$ converges linearly to a fixed point of $T_{\mathtt{DRAP}}$ provided that the initial point is sufficiently close to $\hat{y}$.
\end{theorem}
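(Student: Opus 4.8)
Here is how I would prove Theorem~\ref{t:VA}.

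\medskip

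\noindent\textbf{Setup.} The plan is to combine the almost averagedness of $T_{\mathtt{DRAP}}$ from Lemma~\ref{l:p.a.a.} with the assumed metric subregularity of $\Psi=T_{\mathtt{DRAP}}-\Id$, along the lines of the abstract convergence scheme of \cite{LukNguTam18}. First note that $\hat y\in\Fix T_{\mathtt{DRAP}}$: since $\hat y\in A\cap B$ we have $P_B(\hat y)=\hat y$ and $P_A(\hat y)=\hat y$, hence $\hat y\in T_{\mathtt{DRAP}}(\hat y)$ by \eqref{DRAP}. Moreover $\Psi^{-1}(0)=\Fix T_{\mathtt{DRAP}}$, and this set is closed because $P_A$ is continuous and $P_B$ has closed graph, so $T_{\mathtt{DRAP}}$ has closed graph. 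Metric subregularity at $\hat y$ for $0$ then provides $\kappa>0$ and $\delta_0>0$ with
\[
\kappa\,\dist\!\big(y,\Fix T_{\mathtt{DRAP}}\big)\le\dist\!\big(0,\Psi(y)\big)\le\|y^+-y\|\qquad\text{for all }y\in\Ball_{\delta_0}(\hat y),\ y^+\in T_{\mathtt{DRAP}}(y).
\]

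\medskip

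\noindent\textbf{One-step contraction.} Fix $\varepsilon>0$ small and use Lemma~\ref{l:p.a.a.} to get $\delta_1>0$ and $\alpha\in(0,1)$ so that $T_{\mathtt{DRAP}}$ is almost averaged on $\Ball_{\delta_1}(\hat y)$ with violation $\varepsilon$ and averaging constant $\alpha$. Work inside $\Ball_\rho(\hat y)$ with $\rho=\tfrac12\min\{\delta_0,\delta_1\}$. For an iterate $y^{(k)}\in\Ball_\rho(\hat y)$, let $\bar y^{(k)}$ be a nearest point of $\Fix T_{\mathtt{DRAP}}$ to $y^{(k)}$; since $\hat y\in\Fix T_{\mathtt{DRAP}}$, $\|\bar y^{(k)}-\hat y\|\le\dist(y^{(k)},\Fix T_{\mathtt{DRAP}})+\|y^{(k)}-\hat y\|\le2\rho\le\delta_1$, so both $y^{(k)}$ and $\bar y^{(k)}$ lie in $\Ball_{\delta_1}(\hat y)$. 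Applying Definition~\ref{d:p.a.a.} with base point $\bar y^{(k)}$ (choosing its image to be $\bar y^{(k)}$ itself, legitimate since $\bar y^{(k)}\in T_{\mathtt{DRAP}}(\bar y^{(k)})$) and with $z=y^{(k)}$, $z^+=y^{(k+1)}$ gives
\[
\|y^{(k+1)}-\bar y^{(k)}\|^2\le(1+\varepsilon)\,\|y^{(k)}-\bar y^{(k)}\|^2-\frac{1-\alpha}{\alpha}\,\|y^{(k+1)}-y^{(k)}\|^2.
\]
Bounding the left side below by $\dist(y^{(k+1)},\Fix T_{\mathtt{DRAP}})^2$ and using the subregularity estimate $\|y^{(k+1)}-y^{(k)}\|\ge\kappa\,\dist(y^{(k)},\Fix T_{\mathtt{DRAP}})$ on the right yields
\[
\dist\!\big(y^{(k+1)},\Fix T_{\mathtt{DRAP}}\big)^2\le\Big[(1+\varepsilon)-\tfrac{(1-\alpha)\kappa^2}{\alpha}\Big]\dist\!\big(y^{(k)},\Fix T_{\mathtt{DRAP}}\big)^2.
\]
Since the averaging constant supplied by Lemma~\ref{l:p.a.a.} can be kept bounded away from $1$ uniformly in $\varepsilon$ (it is built in \cite[Proposition~2]{Tha18} from the violation-free averaging constants of $P_A$ and $P_B$), one may choose $\varepsilon$ so small that the bracketed quantity is $<1$; call its (nonnegative) square root $c\in[0,1)$, so $\dist(y^{(k+1)},\Fix T_{\mathtt{DRAP}})\le c\,\dist(y^{(k)},\Fix T_{\mathtt{DRAP}})$.

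\medskip

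\noindent\textbf{From distance decay to linear convergence, and confinement.} The displayed inequality also gives $\|y^{(k+1)}-\bar y^{(k)}\|\le\sqrt{1+\varepsilon}\,\dist(y^{(k)},\Fix T_{\mathtt{DRAP}})$, hence $\|y^{(k+1)}-y^{(k)}\|\le(1+\sqrt{1+\varepsilon})\,\dist(y^{(k)},\Fix T_{\mathtt{DRAP}})\le(1+\sqrt{1+\varepsilon})\,c^k\,\dist(y^{(0)},\Fix T_{\mathtt{DRAP}})$, which is summable. Thus $(y^{(k)})$ is Cauchy and converges to some $\bar y$; closedness of $\Fix T_{\mathtt{DRAP}}$ together with $\dist(y^{(k)},\Fix T_{\mathtt{DRAP}})\to0$ forces $\bar y\in\Fix T_{\mathtt{DRAP}}$, and telescoping the tail gives $\|y^{(k)}-\bar y\|\le\tfrac{1+\sqrt{1+\varepsilon}}{1-c}\,c^k\,\dist(y^{(0)},\Fix T_{\mathtt{DRAP}})$, i.e. linear convergence. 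For confinement, $\dist(y^{(0)},\Fix T_{\mathtt{DRAP}})\le\|y^{(0)}-\hat y\|$, so the total displacement is at most $\tfrac{1+\sqrt{1+\varepsilon}}{1-c}\|y^{(0)}-\hat y\|$; choosing $\|y^{(0)}-\hat y\|$ small enough relative to $\rho$ keeps every $y^{(k)}$, and hence every $\bar y^{(k)}$, in $\Ball_\rho(\hat y)$, which closes a routine induction justifying the use of Lemma~\ref{l:p.a.a.} and of the subregularity estimate at each step.

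\medskip

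\noindent\textbf{Main obstacle.} The delicate point is the interplay of quantifiers in the contraction step: the neighborhood radius $\delta_1$ and the averaging constant $\alpha$ produced by Lemma~\ref{l:p.a.a.} depend on $\varepsilon$, while the subregularity modulus $\kappa$ is fixed, so one must check that $\varepsilon$ can be driven below $(1-\alpha)\kappa^2/\alpha$ while the neighborhoods $\Ball_{\delta_0}(\hat y)$ and $\Ball_{\delta_1}(\hat y)$ still overlap. This hinges on $\alpha$ staying bounded below $1$ as $\varepsilon\downarrow0$, as noted above. Everything else is the standard template from \cite{LukNguTam18}, and I expect the actual write-up to consist mostly of bookkeeping the radii and the choice of $\varepsilon$.
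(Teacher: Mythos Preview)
Your proposal is correct and follows exactly the route the paper has in mind: combine the almost averagedness of $T_{\mathtt{DRAP}}$ from Lemma~\ref{l:p.a.a.} (which rests on the prox-regularity of $A$ and $B$) with the assumed metric subregularity of $\Psi$, and invoke the abstract linear-convergence template of \cite{LukNguTam18,Tha18}. In fact the paper \emph{omits} the proof entirely, stating only that Theorem~\ref{t:VA} is \cite[Theorem~2]{Tha18} with the prox-regularity of the sets additionally taken into account; your write-up supplies precisely those omitted details, including the bookkeeping on radii and the check that the averaging constant $\alpha$ stays bounded away from~$1$ as $\varepsilon\downarrow 0$ (which is indeed the case, since the averaging constants of $P_A$ and $P_B$ entering \cite[Proposition~2]{Tha18} are fixed at $1/2$ independently of the violation).
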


Compared to \cite[Theorem 2]{Tha18}, Theorem \ref{t:VA} additionally takes the prox-regularity of the sets $A$ and $B$ into account.
The proof is omitted for brevity.

\begin{remark}[necessity of metric subregularity]
There are two types of regularity conditions often required to obtain a convergence result in the nonconvex optimization literature.
The geometry of the phase retrieval problem yields one type of regularity, that is, the prox-regularity of the sets.
The second type of regularity, termed as \textit{metric subregularity}, is difficult to verify, but as been recently shown in \cite{LukTebNgu20} this condition is not only \textit{sufficient} but also {\em necessary} for local linear convergence.
\end{remark}

\begin{remark}[analysis for inconsistent feasibility]\label{r:convergence for inconsistent}
To analyze convergence properties of DRAP in the more challenging setting of inconsistent feasibility (i.e., phase retrieval with noise and model deviation), more technical details are required.
This task can be done by following the lines of \cite{LukNguTam18}, however, the technical assumption of metric subregularity again remain unverifiable in the setting of phase retrieval.
Hence, we chose to formulate the result in the simpler consistent setting.	
\end{remark}

Our goals in the remainder of this section are 1) to link the abstract metric subregularity condition imposed in Theorem \ref{t:VA} to the physical figures of phase retrieval, and 2) to connect the two convergence criteria formulated in Theorems \ref{t:LA} \& \ref{t:VA} by showing that their key assumptions to some extent can be traced back to a common condition on the phase diversities which are the almost only adjustable figures of phase retrieval.

The subsequent analysis is valid only for the phase retrieval setting with two images, that is, we consider $m=2$ in \eqref{matrix M}.
We first recall the concept of transversality.

\begin{definition}[transversality] \cite[page 99]{ClaLedSteWol98}
A pair of sets $\{A,B\}$ is \emph{transversal} at a point $\hat{y}$ in their intersection if 
\begin{equation*}
N_A(\hat{y}) \cap (-N_B(\hat{y})) = \{0\}.
\end{equation*}
\end{definition}

The origin of this concept can be traced back to at least the 19th century in differential geometry which deals with smooth manifolds \cite{GuiPol74}.
We refer the reader to, for example, \cite{Kru06,KruTha15,KruTha16,KruLukNgu18} for various characterizations of transversality and its application in feasibility problem.
The following result shows that the metric subregularity condition in Theorem \ref{t:VA} can be deduced from the transversality property.

\begin{proposition}[transversality implies metric subregularity]\label{p:assymptotic} Let $\hat{y}\in\mathbb{C}^N$ be a solution to \eqref{PR_Fourier} and suppose that the pair of sets $\{A,B\}$ defined in \eqref{A&B} is transversal at $\hat{y}$.
Then, the set-valued mapping $\Psi:=T_{\mathtt{DRAP}}-\Id$ is metrically subregular at $\hat{y}$ for $0$.
\end{proposition}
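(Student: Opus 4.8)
The plan is to invoke the general machinery linking transversality of a pair of sets to metric subregularity of the associated fixed-point displacement mapping, exploiting that in the present setting $A$ is an affine subspace (by Proposition~\ref{p:remove_chi} we may assume $\chi=\mathbb{C}^n$, so $A=M(\mathbb{C}^n)$ is a linear subspace) and $B$ is prox-regular at $\hat y$ by Lemma~\ref{l:prox_B}. The key observation is that $T_{\mathtt{DRAP}}=\lambda T_{\mathtt{DR}}+(1-\lambda)T_{\mathtt{AP}}$ by \eqref{DRAP_0}, so that $\Fix T_{\mathtt{DRAP}}$ is governed by the same local geometry of $\{A,B\}$ near $\hat y$ as $\Fix T_{\mathtt{AP}}$ and $\Fix T_{\mathtt{DR}}$; in particular $\hat y\in A\cap B\subset\Fix T_{\mathtt{DRAP}}$. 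First I would record that transversality of $\{A,B\}$ at $\hat y$ is equivalent (via the known characterizations cited after the definition, e.g.\ \cite{KruLukNgu18,KruTha16}) to a local linear regularity / subtransversality-type estimate: there exist $\delta>0$ and $\gamma>0$ with
\begin{equation*}
\dist\bigl(y,A\cap B\bigr)\le \gamma\,\max\bigl\{\dist(y,A),\dist(y,B)\bigr\}\qquad\forall\,y\in\mathbb{B}_\delta(\hat y).
\end{equation*}

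Next I would bound $\dist(y,A\cap B)$ above by a multiple of $\norm{y-T_{\mathtt{DRAP}}(y)}$ on a possibly smaller ball. The mechanism is: (i) the residual $\norm{y-T_{\mathtt{DRAP}}(y)}$ controls $\dist(y,B)$ because the $P_B$-dependent part of each constituent operator ($T_{\mathtt{AP}}$ and $T_{\mathtt{DR}}$) moves $y$ by an amount comparable to $\norm{y-P_B(y)}$ when $B$ is prox-regular, using Lemma~\ref{l:prox_B} and \cite[Theorem~2.14]{HesLuk13}; (ii) since $P_A$ is a linear (nonexpansive, in fact firmly nonexpansive) projection, once $y$ is close to $B$ the residual also controls $\dist(P_B(y),A)$, hence $\dist(y,A)$ up to the small quantity $\dist(y,B)$. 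Combining (i)–(ii) with the convex-combination structure \eqref{DRAP_0} and the averagedness estimate of Lemma~\ref{l:p.a.a.}, one gets constants $\kappa_0>0$, $\delta_0>0$ with $\max\{\dist(y,A),\dist(y,B)\}\le \kappa_0\norm{y-T_{\mathtt{DRAP}}(y)}$ for $y\in\mathbb{B}_{\delta_0}(\hat y)$. Feeding this into the transversality estimate above yields
\begin{equation*}
\dist\bigl(y,A\cap B\bigr)\le \gamma\kappa_0\,\norm{y-T_{\mathtt{DRAP}}(y)}=\gamma\kappa_0\,\dist\bigl(0,\Psi(y)\bigr),
\end{equation*}
and since $A\cap B\subset\Fix T_{\mathtt{DRAP}}=\Psi^{-1}(0)$, this is exactly metric subregularity of $\Psi$ at $\hat y$ for $0$ with modulus $\kappa=(\gamma\kappa_0)^{-1}$ on the ball $\mathbb{B}_{\min\{\delta,\delta_0\}}(\hat y)$.

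An alternative, perhaps cleaner route: apply the first-order / spectral description already available in Section~\ref{subs:TooL_LA}. Near $\hat y$ the operator $\Id-T_{\mathtt{DRAP}}$ has the first-order model from Lemma~\ref{l:gradient map}, namely $v\mapsto -Y\nabla(Y^*v)+o(\norm v)$ with $\nabla(\mu)=\lambda(I-LL^*)\mu+\jj((1+\lambda)LL^*-\lambda I)\Im(\mu)$; one then shows that transversality of $\{A,B\}$ forces the linearization $v\mapsto Y\nabla(Y^*v)$ to be metrically regular onto its range transverse to the fixed-point manifold (the kernel directions being exactly the tangent space $\mathbb{R}\hat x\oplus\mathbb{R}\jj\hat x$ handled by the global phase ambiguity, cf.\ Lemma~\ref{l:first spectrum}), and metric subregularity of the smooth-plus-$o(\norm{\cdot})$ map follows from metric regularity of its derivative by a standard perturbation argument (Lyusternik–Graves type, cf.\ \cite{DonRoc14}). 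The main obstacle in either route is bookkeeping the interaction between the prox-regularity of $B$ (which only gives \emph{almost} averagedness / a one-sided estimate with a controllable violation $\varepsilon$) and the transversality estimate: one must choose $\varepsilon$ small relative to $\gamma$ so the violation does not swallow the linear-regularity bound, and shrink $\delta$ accordingly. Everything else is a packaging of Lemmas~\ref{l:prox_B}–\ref{l:p.a.a.}, Proposition~\ref{p:remove_chi} and the cited equivalences for transversality.
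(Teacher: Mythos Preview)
Your overall architecture matches the paper's: obtain a local estimate
\[
\kappa\,\dist(y,A\cap B)\le \norm{y-y^+}\quad\forall\,y\in\mathbb{B}_\delta(\hat y),\ y^+\in T_{\mathtt{DRAP}}(y),
\]
and then pass to metric subregularity via the inclusion $A\cap B\subset\Fix T_{\mathtt{DRAP}}=\Psi^{-1}(0)$. The paper does not re-derive the displayed inequality at all; it simply invokes \cite[Lemma~3]{Tha18}, which already packages transversality of $\{A,B\}$ at $\hat y$ into exactly this residual bound for $T_{\mathtt{DRAP}}$. The remaining two lines are precisely your final step.

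Where your derivation departs from the paper is in trying to rebuild that lemma via steps (i)--(ii), and step~(i) as stated has a real gap. The claim that $\norm{y-T_{\mathtt{DRAP}}(y)}$ controls $\dist(y,B)$ ``because \ldots\ $B$ is prox-regular'' is not correct: prox-regularity of $B$ and linearity of $P_A$ alone do not give a bound $\dist(y,B)\le\kappa_0\norm{y-T_{\mathtt{DRAP}}(y)}$ with $\kappa_0$ independent of the angle between $A$ and $B$. Already for two lines in $\mathbb{R}^2$ through the origin at angle $\theta$, with $y\in A$ at unit distance from the origin, one has $\dist(y,B)=\sin\theta$ while $\norm{y-T_{\mathtt{AP}}(y)}=\sin^2\theta$, so the constant in your step~(i) must blow up as $\theta\downarrow 0$. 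In other words, the transversality information cannot be applied only \emph{after} steps (i)--(ii); it is already needed to obtain the constant $\kappa_0$ in those steps. This is exactly what \cite[Lemma~3]{Tha18} handles, and why the paper cites it rather than separating the argument as you do. Your alternative spectral route would also need additional work to identify transversality of $\{A,B\}$ with injectivity of the linearization on the complement of the global-phase direction; that link is not established in Section~\ref{subs:TooL_LA}.
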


\begin{proof}
By \cite[Lemma 3]{Tha18} there exist numbers $\delta>0$ and $\kappa>0$ such that
\begin{equation}\label{assymptotic}
\kappa\dist(y,{A\cap B}) \le \norm{y-y^+},\quad  \forall y\in \mathbb{B}_{\delta}(\hat{y}),\; y^+ \in T_{\mathtt{DRAP}}(y).
\end{equation}
Taking the infimum over all $y^+\in T_{\mathtt{DRAP}}(y)$ in the right-hand side of \eqref{assymptotic} and noting that
\[
A\cap B\subset \Fix T_{\mathtt{DRAP}} = \Psi^{-1}(0),
\]
we obtain that
\begin{align*}
\kappa\dist(y,\Psi^{-1}(0)) &\le \kappa\dist(y,{A\cap B})
\\
&\le \dist(y,T_{\mathtt{DRAP}}(y)) = \dist(0,\Psi(y)),\quad  \forall y\in \mathbb{B}_{\delta}(\hat{y}).
\end{align*}
This yields metric subregularity of $\Psi$ at $\hat{y}$ for $0$ as claimed.
\qed
\end{proof}

\begin{remark}\label{r:why m=2}
The restriction $m=2$ involves in Proposition \ref{p:assymptotic} only in an implicit manner.
A further analysis\footnote{It is not presented here for the sake of brevity.} reveals that $m=2$ is a necessary condition for having the transversality assumption fulfilled.
\end{remark}

\begin{proposition}\label{p:transversal of Omega_d}
Let $\hat{y} = M\hat{x} \in\mathbb{C}^N$ be a solution to \eqref{PR_Fourier}.
Then, the pair of sets $\{A,B\}$ defined in \eqref{A&B} is transversal at $\hat{y}$ if and only if the pair of sets $\{\Omega_1,\Omega_2\}$ defined in \eqref{amp_con_set} is transversal at $\hat{x}$.
\end{proposition}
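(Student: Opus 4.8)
The plan is to transfer the transversality condition back and forth between the Fourier-domain pair $\{A,B\}$ and the physical-domain pair $\{\Omega_1,\Omega_2\}$ by exploiting the unitary structure of the propagation matrix $M$ and of the block-diagonal matrix $U$ introduced in the proof of Lemma~\ref{l:P_A}. The crucial observation is that transversality is defined purely in terms of limiting normal cones, and normal cones behave predictably under the unitary linear isomorphisms that connect the various feasibility models in Proposition~\ref{p:problem_relation}. So the proof is essentially a normal-cone bookkeeping argument, with the substantive content being the identification of $N_A$, $N_B$ in terms of $N_{\Omega_1}$, $N_{\Omega_2}$.

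First I would recall (from the discussion around \eqref{PR_physics_product}--\eqref{PR_Fourier} and the proof of Lemma~\ref{l:P_A}) that with $m=2$ and $\chi=\mathbb{C}^n$ the Fourier-domain sets decompose as
\begin{equation*}
A = M(\mathbb{C}^n) = \tfrac{1}{\sqrt{2}}\,U\bigl([\mathbb{C}^n]_2\bigr) = \tfrac{1}{\sqrt 2}\,U\bigl(D_0\bigr),
\qquad
B = \tfrac{1}{\sqrt 2}\,U\bigl(\Omega_1\times\Omega_2\bigr),
\end{equation*}
where $D_0 := [\mathbb{C}^n]_2$ is the diagonal subspace of $\mathbb{C}^{2n}$ and $U$ is the unitary block-diagonal matrix of \eqref{matrix U}. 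The second identity holds because $|FD_d(x)|^2/2 = r_d$ on $\Omega_d$ translates, after applying the per-block unitary $FD_d$, precisely into the amplitude condition defining $B$ in the rescaled coordinates. Under a unitary linear map $V$ and a nonzero scalar multiple, the limiting normal cone transforms by $N_{V(S)}(V\hat{u}) = V\bigl(N_S(\hat u)\bigr)$ and $N_{cS}(c\hat u) = N_S(\hat u)$; this is standard (cf.\ \cite{VA}) and I would cite it. Applying this with $V = U$, writing $\hat u := ([\hat x]_2)/\sqrt 2$ for the point in $D_0$ corresponding to $\hat y = M\hat x$, gives
\begin{equation*}
N_A(\hat y) = U\bigl(N_{D_0}(\hat u)\bigr),
\qquad
N_B(\hat y) = U\bigl(N_{\Omega_1\times\Omega_2}(\hat u)\bigr) = U\bigl(N_{\Omega_1}(\hat x)\times N_{\Omega_2}(\hat x)\bigr),
\end{equation*}
the last equality by the product rule for normal cones. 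Since $D_0$ is a subspace, $N_{D_0}(\hat u) = D_0^\perp = \{(v,-v)\mid v\in\mathbb{C}^n\}$.

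Now $\{A,B\}$ is transversal at $\hat y$ iff $N_A(\hat y)\cap(-N_B(\hat y)) = \{0\}$; applying the injective linear map $U$ throughout (and $U^{-1}$ on the other side) this is equivalent to $D_0^\perp \cap \bigl(-N_{\Omega_1}(\hat x)\times -N_{\Omega_2}(\hat x)\bigr) = \{0\}$ in $\mathbb{C}^{2n}$. A vector in the intersection has the form $(v,-v)$ with $v\in -N_{\Omega_1}(\hat x)$ and $-v\in -N_{\Omega_2}(\hat x)$, i.e.\ $-v\in N_{\Omega_1}(\hat x)$ and $v\in N_{\Omega_2}(\hat x)$; the intersection reduces to $\{0\}$ iff $N_{\Omega_2}(\hat x)\cap(-N_{\Omega_1}(\hat x)) = \{0\}$, which is exactly transversality of $\{\Omega_1,\Omega_2\}$ at $\hat x$. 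This closes the equivalence. The main obstacle I anticipate is the careful verification that $B$ really equals $\tfrac{1}{\sqrt 2}U(\Omega_1\times\Omega_2)$ as sets at the level of the relevant base points (unwinding the $1/m$ normalizations in \eqref{amp_con_set} versus \eqref{A&B} and the $1/\sqrt m$ in \eqref{matrix M}), plus invoking cleanly the behaviour of limiting normal cones under unitary maps and products — these are routine but must be stated precisely so the chain of equivalences is airtight; everything else is linear algebra.
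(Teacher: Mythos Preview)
Your proposal is correct and takes essentially the same approach as the paper: both transfer transversality through the unitary map $U$ and the $1/\sqrt 2$ scaling to the product-space pair $\{D_0,\Omega_1\times\Omega_2\}$, then reduce to $\{\Omega_1,\Omega_2\}$; the paper handles this second step by citing \cite[p.~505]{LewLukMal09}, whereas you carry out the normal-cone computation explicitly. One minor slip to fix: the preimage point should be $\hat u=[\hat x]_2$, not $[\hat x]_2/\sqrt 2$, since $\hat y=(1/\sqrt 2)\,U([\hat x]_2)$ --- your subsequent product-rule expression $N_{\Omega_1}(\hat x)\times N_{\Omega_2}(\hat x)$ already uses the correct base point.
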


\begin{proof}
Since the sets $A$ and $B$ are respectively the images of the sets $D$ and $\Omega$ defined in \eqref{Omega and D} via the unitary mapping $U$ given by \eqref{matrix U} followed by the scaling of factor $1/\sqrt{2}$, the pair of sets $\{A,B\}$ is transversal at $\hat{y}$ if and only if the pair of sets $\{D,\Omega\}$ is transversal at $(\hat{x},\hat{x})$. 
The latter is in turn equivalent to the transversality of $\{\Omega_1,\Omega_2\}$ at $\hat{x}$ in view of \cite[page 505]{LewLukMal09}.
\qed
\end{proof}

In view of Propositions \ref{p:assymptotic} \& \ref{p:transversal of Omega_d}, the metric subregularity condition in Theorem \ref{t:VA} is guaranteed by the transversality of $\{\Omega_1,\Omega_2\}$ at $\hat{x}$.
In view of \eqref{amp_con_set}, the latter sets are tied to the phase diversities $\{\phi_1,\phi_2\}$ which are represented as unitary matrices $\{D_1,D_2\}$ in \eqref{amp_con_set}.
Unfortunately, the question of choosing $\{\phi_1, \phi_2\}$ such that $\{\Omega_1,\Omega_2\}$ satisfies the transversality at $\hat{x}$ or some weaker property but sufficient for the metric subregularity condition in Theorem \ref{t:VA} is not trivial and open.

We conclude this section with an overall remark on the obtained convergence results.

\begin{remark}\label{r:criteria compared}
Convergence criteria for the DRAP algorithm in Theorems \ref{t:LA} \& \ref{t:VA} are derived from two different analysis approaches\footnote{We are not aware of any other analysis scheme relevant to the phase retrieval problem.}, however, their key assumptions to some extent can be related to the choice of phase diversities.
\end{remark}

\section{Numerical simulation}\label{s:numerical experiment}

We simulate an imaging system with physical parameters as summarized in Table \ref{tbl:sml_parameters}.
The simulation phase $\Phi$ is shown in Figure \ref{fig:sml phase restored} (right).
For the forward model, the amplitude $\chi$ is constant over the pixels in the aperture\footnote{The amplitude is presumed unknown when solving the inverse problem.}.
Five PSF images corresponding to the five phase diversities 
\[
\phi_d = \pi\cdot z_d\cdot Z_2^0,\quad (z_d = -2,-1,0,1,2)
\]
are respectively calculated by
\begin{equation}\label{inten_generate}
p_d = \parennnn{\fft\paren{\chi\cdot \ee^{\jj\paren{\Phi+\phi_d}}}}^2,\quad (d = 1,\ldots,5).
\end{equation}

\begin{table}[h]
	\centering{
		\begin{tabular}[1\baselineskip]{|c|c|c|c|c|c|}
			\hline
			Aperture & Numerical aperture & Wavelength & Pixel size & Image size \\ \hline
			Circular  & 0.25 & 0.633 $\mu m$ & 0.44 $\mu m$ & $256\times 256$\\ \hline
	\end{tabular}}
	\caption{Physical parameters of the simulated imaging system.}
	\label{tbl:sml_parameters}
\end{table}	

We consider a practically relevant case where the images are corrupted with both Poisson and Gaussian noise.
After normalizing the five images generated by \eqref{inten_generate} such that their highest intensities are unity, the normalized images are corrupted with Poisson noise using the Matlab function \textit{imnoise}.
Then, after scaling these noisy images to have the highest intensities of the original ones, we introduce additive white Gaussian noise with signal-to-noise ratio (SNR) 10 dB (decibel) using the Matlab function \textit{awgn}.
The input data images, which are denoted by $r_d$ ($d=1,2,\ldots,5$), are finally obtained by replacing all the negative entries of the corrupted images by zeros.

\begin{figure}[htbp]
	\centering
	\includegraphics[scale=.76]{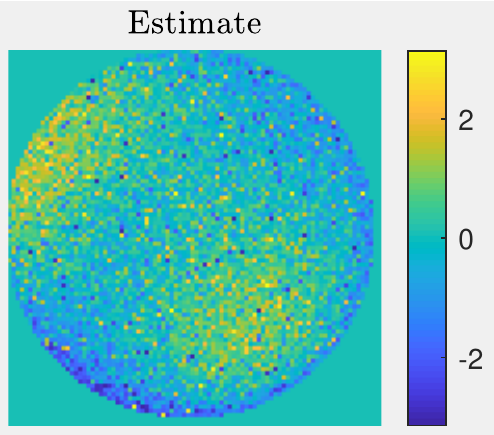}\;
	\includegraphics[scale=.76]{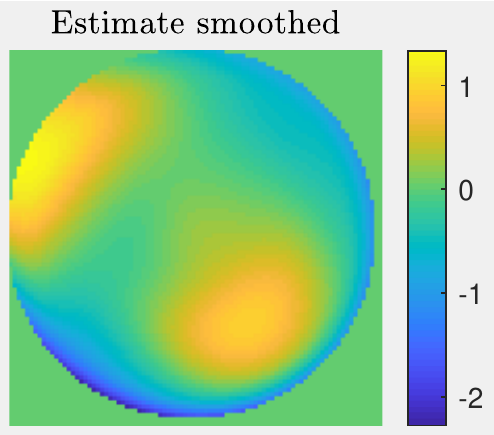}\;
	\includegraphics[scale=.76]{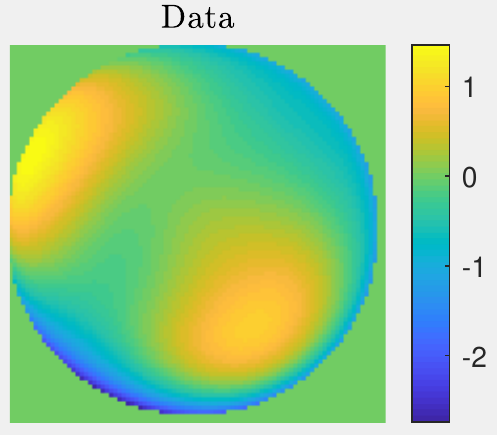}
	\caption{Phase retrieved by DRAP algorithm (left) and its smoothed version using Zernike polynomials (middle) compared to the data phase (right).}
	\label{fig:sml phase restored}
\end{figure}

\begin{figure}[H]
	\centering
	\includegraphics[scale=.6]{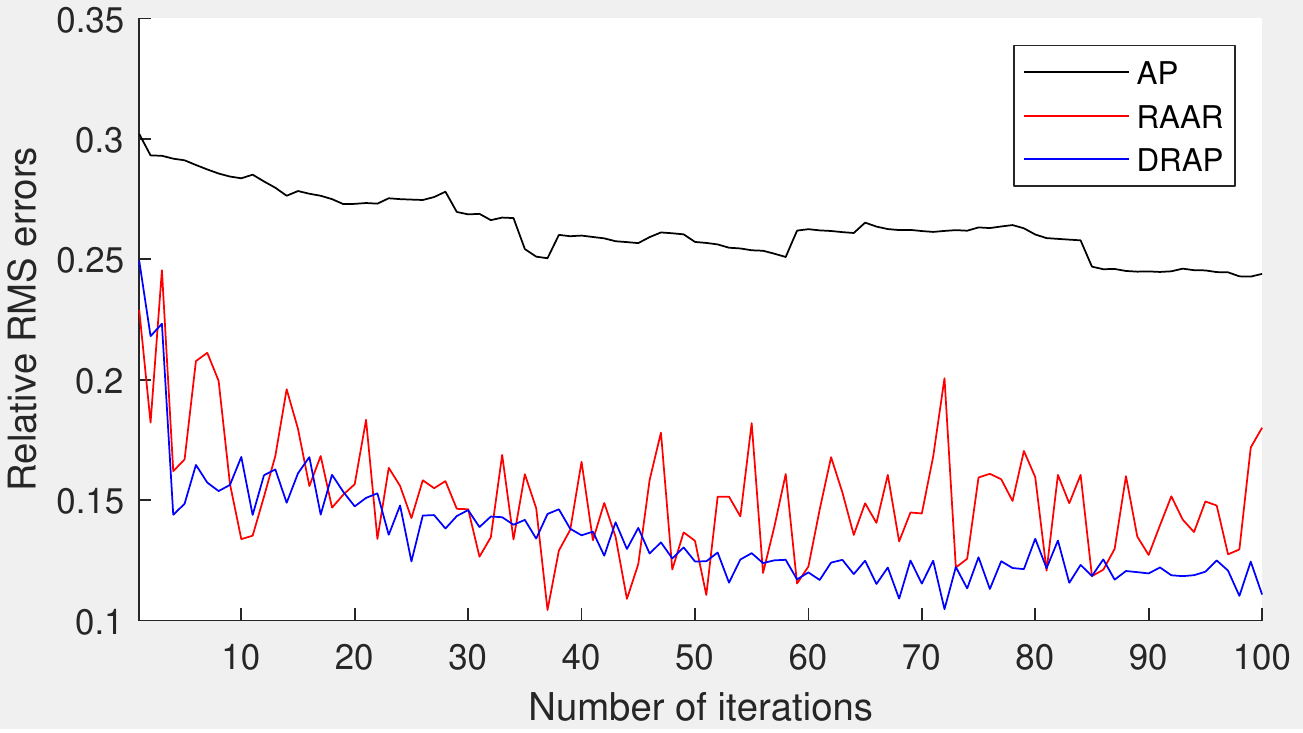}
	\caption{Performance of DRAP (the blue curve) compared to that of AP (the black curve) and RAAR (the black curve) algorithms.}
	\label{fig:simulation data}
\end{figure}

We formulate the phase retrieval problem in the form \eqref{PR_Fourier} and restore the phase $\Phi$ given the five noisy PSF images $r_d$ and the physical parameters specified above using DRAP.
The quality of phase retrieval is measured by the relative \textit{root mean square} (RMS) error of the estimated phase relative to the true phase.
The phase retrieved using 100 iterations of DRAP and 20 iterations of AP\footnote{Additional averaging process using alternating projection is essential since DRAP as well as KMDR, HIO and RAAR does not find an approximate solution to \eqref{PR_Fourier} in a direct manner like the AP method.} is shown in Figure \ref{fig:sml phase restored} (left).
The restored phase is then smoothed using the first 37 Zernike polynomials (in Fringe order convention) and the obtained phase $\widehat\Phi$ is shown in middle figure compared to the data $\Phi$ on the right.
The relative RMS error is $\|\widehat{\Phi}-\Phi\|/\norm{\Phi} = 0.1108$.
Since phase retrieval is ambiguous up to (at least) a \textit{piston term} (global phase shift), the piston terms of the phases are removed before calculating the norms.
 
We compare the performance of DRAP with the other projection algorithms for solving \eqref{PR_Fourier} including AP, KMDR, HIO and RAAR.
The overall results are summarized in Figure \ref{fig:simulation data}.
For brevity, we do not show the results for KMDR and HIO since their performance is far worse than that of RAAR and DRAP in both accuracy and stability.
As shown in Figure \ref{fig:simulation data}, phase retrieval by RAAR and DRAP is at almost the same level of accuracy as well as convergence speed, however, DRAP (the blue curve) is more stable than RAAR (the red curve).
The relaxation parameters used for RAAR and DRAP are 0.8 and 0.45, respectively\footnote{Separate computations are done to determine the seemingly optimal relaxation parameters for all KMDR, HIO, RAAR and DRAP.}.
For completeness, AP (the black curve) is more stable than the other algorithms as expected, however, it is incomparable to RAAR and DRAP in both accuracy and convergence speed.



\end{document}